\newcommand{\tfa}{time-frequency analysis}
\newcommand{\stft}{short-time Fourier transform}
\newcommand{\modsp}{modulation space}
\newtheorem{theorem}{Theorem}[section]
\newtheorem{lemma}[theorem]{Lemma}
\newtheorem{proposition}[theorem]{Proposition}
\newtheorem{definition}[theorem]{Definition}
\newtheorem{example}[theorem]{Example}
\newtheorem{remark}[theorem]{Remark}
\newcommand{\beqa}{\begin{eqnarray*}}
\newcommand{\eeqa}{\end{eqnarray*}}
\newcommand{\field}[1]{\mathbb{#1}}
\newcommand{\bR}{\field{R}}        
\newcommand{\bN}{\field{N}}        
\newcommand{\bZ}{\field{Z}}        
\newcommand{\bC}{\field{C}}        
\def\G{\mathcal{G}}
\def\la{\lambda}
\def\eps{\epsilon}
\def\cF{\mathcal{F}}              
\def\cS{\mathcal{S}}
\def\cG{\mathcal{G}}
\def\cM{\mathcal{M}}
\def\cC{\mathcal{C}}
\def\a{\aleph}
\def\rd{\bR^d}
\def\rdd{{\bR^{2d}}}
\def\zdd{{\bZ^{2d}}}
\def\lrd{L^2(\rd)}
\def\intrd{\int_{\rd}}
\def\R{\right)}
\def\<{\left<}
\def\>{\right>}
\def\mv1{M_v^1}
\def\phas{(x,\o )}
\def\mn{(m,n)}
\def\mn'{(m',n')}
\newcommand{\abs}[1]{\lvert#1\rvert}
\def\o{\xi}
\def\a{\alpha}
\def\b{\beta}
\def\R{\mathbb{R}}
\def\Ren{\mathbb{R}^d}
\def\Renn{\mathbb{R}^{2d}}
\def\sch{\mathcal{S}}
\def\Fur{\mathcal{F}}
\def\Sn2{S_{2}(L^{2}(\Ren))}
\def\S1{S_{1}(L^{2}(\Ren))}
\def\sig00{\sigma_{0,0}}
\def\la{\langle}
\def\ra{\rangle}
\begin{document}
\begin{abstract}
We perform a  time-frequency analysis of Fourier multipliers and, more generally, pseudodifferential operators with symbols of Gevrey, analytic and ultra-analytic regularity. As an application we show that Gabor frames, which provide optimally sparse decompositions for Schr\"odinger-type propagators, reveal to be an even more efficient tool for representing solutions to a wide class of evolution operators with constant coefficients, including weakly hyperbolic and parabolic-type operators. Besides the class of operators, the main novelty of the paper is the proof of {\it super-exponential} (as opposite to super-polynomial) off-diagonal decay for the Gabor matrix representation.
\end{abstract}

\title{Gabor representations of evolution operators}

\author{Elena Cordero,  Fabio Nicola  and Luigi Rodino}
\address{Dipartimento di Matematica,
Universit\`a di Torino, via Carlo Alberto 10, 10123 Torino, Italy}
\address{Dipartimento di Scienze Matematiche,
Politecnico di Torino, corso Duca degli Abruzzi 24, 10129 Torino,
Italy}
\address{Dipartimento di Matematica,
Universit\`a di Torino, via Carlo Alberto 10, 10123 Torino, Italy}

\email{elena.cordero@unito.it}
\email{fabio.nicola@polito.it}
\email{luigi.rodino@unito.it}

\subjclass[2010]{35S05,42C15}
\keywords{Pseudodifferential operators, Gelfand-Shilov spaces, short-time Fourier
 transform, Gabor frames, sparse representations, hyperbolic equations, parabolic equations}
\maketitle
\section{Introduction}
A Gabor system is obtained by fixing a function $g\in L^2(\rd)$ and considering the time-frequency shifts
\begin{equation}\label{intro1}
\pi(\lambda)g=e^{2\pi i n x}g(x-m),\quad \lambda=(m,n)\in\Lambda,
\end{equation}
for some lattice $\Lambda\subset\rdd$. A Gabor system is a Gabor frame if there exist $A,B>0$ such that
\begin{equation}\label{intro2}
A\|f\|^2_{L^2}\leq \sum_{\lambda\in\Lambda}|\langle f,\pi(\lambda)g\rangle|^2\leq B\|f\|^2_{L^2}
\end{equation}
for every $f\in L^2(\rd)$, see for example \cite{ibero13,ibero30,ibero45}. With respect to frames of wavelets, curvelets and shearlets, in \eqref{intro1} dilations are then replaced by modulations. From the analytic point of view, this provides symmetric properties with respect to the Fourier transform and a simple treatment, cf.\  \cite{grochenig}.\par
Gabor frames turned out to be appropriate tools for many problems in time-frequency analysis, with relevant applications to signal processing and related issues in Numerical Analysis, see for example \cite{cordero-feichtinger-luef,str06}, and references there. More recently, attention has been addressed to the analysis of partial differential equations, especially the Schr\"odinger, wave and Klein-Gordon equations with constant coefficients \cite{bertinoro2,bertinoro3,bertinoro12,bertinoro17,kki1,kki2,kki3,MNRTT,baoxiang0,bertinoro57,bertinoro58,bertinoro58bis}; see also the recent survey \cite{ruz}. The main results and techniques are now also available in the monograph \cite{baoxiang}. Here we carry on this investigation. \par 
 To give a general setting for our results, let us denote by $T$ the linear operator providing the solution of a well-posed Cauchy problem for a partial differential equation, in suitable function spaces. Generically, we expect that $T$ is written in the form of pseudodifferential operator (PSDO), or Fourier integral operator (FIO). Both for theoretic and numeric purposes often it turns out necessary to decompose the initial datum with respect to a given frame and to reconstruct the solution by superposition, after studying the evolution of each wave packet. In this picture, the operator $T$  is then regarded as an infinite matrix, which for Gabor frames reads
\begin{equation}\label{intro3}
\langle T \pi(\mu)g,\pi(\lambda)g\rangle.
\end{equation}
The more this matrix is sparse, the more the above representation is tight.\par 
For example, let $T$ be a pseudodifferential operator with symbol $\sigma(x,\xi)$ in the class $S^0_{0,0}$, namely satisfying \begin{equation}\label{intro9}
|\partial^\alpha \sigma(z)|\leq C_{\alpha},\quad z=(x,\xi)\in\rdd.
\end{equation}
 Then, it was proved in \cite{charly06,GR,rochberg,tataru} that $T$ has a super-polynomial Gabor decay:
\begin{equation}\label{intro10}
|\langle T \pi(\mu)g,\pi(\lambda)g\rangle|\leq C_N (1+|\lambda-\mu|)^{-N}
\end{equation}
for every $N>0$, if the window $g$ is Schwartz.\par
Instead, when $T$ is a FIO with phase function of quadratic type, cf.\  \cite{AS78}, a similar result was shown in \cite{fio1,tataru}, with the difference $|\lambda-\mu|$ replaced by $|\lambda-\chi(\mu)|$, where $\chi$ is the corresponding canonical transformation. Applications were given there to Schr\"odinger equations with variable coefficients; see also \cite{CNG,fio5,fio3,QY,QY2010}. \par
For strictly hyperbolic equations with variable coefficients it is instead well-known \cite{cnr-flp,cnr-global} that the corresponding propagators, which are classical FIO with phase function homogeneous of degree 1 in the dual variables, cf.\  \cite[Vol.\ 4]{hormander}, do not have a sparse Gabor matrix; see also Remark \ref{osservazione-variabile} below. For these operators the ``correct'' wave packets are given by curvelet-type waves, tailored to a parabolic scaling, as shown in \cite{candes,B18,GT,guo-labate,Smith}. All these papers show a super-polynomial decay for the matrix of the propagator. 
\par
The present work is devoted to a systematic study in terms of Gabor frames of a general class of constant coefficient evolution operators. Besides the class of operators, the main novelty with respect to the existent literature is the investigation of super-exponential decay of the Gabor matrix (as opposite to super-polynomial).\par
To be more precise, we consider operators of the form 
\begin{equation}\label{op000}
P(\partial_t,D_x)=\partial_t^m +\sum_{k=1}^{m}a_k(D_x)\partial_t^{m-k},\quad t\in\R,\ x\in\R^d
\end{equation}
where $a_{k}(\xi)$, $1\leq k\leq m$, are polynomials, whose degree may be arbitrary; as usual, $D_{x_j}=\frac{1}{2\pi i}\partial_{x_j}$, $j=1,\ldots,d$.
We suppose that the corresponding forward Cauchy problem is well-posed in a very mild sense, namely in $\cS(\rd)$. A necessary and sufficient condition for this is given by the forward Hadamard-Petrowsky condition, involving the complex roots $\tau\in\bC$ of the symbol $P(i\tau,\xi)$:
\begin{equation}\label{hp000}
(\tau,\xi)\in\mathbb{C}\times\rd,\quad P(i\tau,\xi)=0 \Longrightarrow {\rm Im}\, \tau\geq -C,
\end{equation}
for some constant $C>0$. When the polynomials $a_k(\xi)$ in \eqref{op000} have degree $\leq k$, this reduces to the definition of (weak) hyperbolicity, but we do not assume this here. There is a wide literature concerning constant coefficients operators, and the study of basic problems as hypoellipticity, fundamental solutions, etc., has reached the highest level of sophistication, with a combination of techniques from Algebraic Geometry and Mathematical Analysis \cite[Vol. II]{hormander}. 
Now, we will show that the Gabor matrix of the corresponding propagator enjoys a  {\it super-exponential}  decay, and we link the precise decay rate to the algebraic growth of the imaginary part of the roots $\tau$ of $P(i\tau,\zeta)$, as $\zeta\in\bC$, ${\rm Im}\, \zeta\to+\infty$. \par
 \par
 As a simple and typical model, let us consider the Cauchy problem for the wave equation:
 \begin{align}\label{intro4}
 &\partial^2_t u-\Delta_x u=0,\quad (t,x)\in\R\times\rd,\\ \nonumber
 & u(0,x)=u_0(x),\quad \partial_t u(0,x)=u_1(x).
 \end{align}
We may express the solution in the form
\begin{equation}\label{intro5}
u(t,x)=T_t u_1(x)+\partial_t T_t u_0(x),
\end{equation}
where $T_t$ is the Fourier multiplier
\begin{equation}
T_t f(x)=\int e^{2\pi i x\xi}\sigma_t(\xi) \widehat{f}(\xi)\, d\xi,
\end{equation}
with symbol
\begin{equation}\label{intro7}
\sigma_t(\xi)=\frac{\sin(2\pi|\xi|t)}{2\pi|\xi|},\quad \xi\in\R^d.
\end{equation}
Taking the Gaussian as window function $g$ one has in fact the Gaussian decay
 \begin{equation}\label{intro8}
 |\langle T_t \pi(\mu)g,\pi(\lambda)g\rangle|\leq C \exp\big(-\epsilon|\lambda-\mu|^2);
\end{equation}
(we address to the sequel of the paper for the dependence of $C$ and $\epsilon>0$ on $t$).
This is easily obtained from the explicit expression of the forward fundamental solution $E(t,x)=\Fur^{-1}_{\xi\to x} \sigma(t,\xi)$ (see Example \ref{esempio} below), but this approach does not extend to the more general equations above. Instead, we regard here the propagator $T_t$ as a PSDO where, with respect to the approach via Fourier integral operators, the $\xi$-dependent phase functions are absorbed into the symbol $\eqref{intro7}$. Because of its oscillations, $\sigma_t(\xi)$ then belongs the the non-standard H\"ormander's class $S^0_{0,0}$ defined by \eqref{intro9}, and therefore \eqref{intro10} holds for $T_t$.
Actually, seeking super-exponential decay, is is natural to introduce an analytic version of the class $S^0_{0,0}$, by assuming for $s\geq0$
\begin{equation}\label{intro11}
|\partial^\alpha \sigma(z)|\leq C^{|\alpha|+1}(\alpha!)^s,\quad z\in\rdd,
\end{equation}
for a constant $C>0$ depending on $\sigma$. We say that the symbol $\sigma$ is Gevrey when $s>1$, analytic when $s=1$, and ultra-analytic if $s<1$.  

We will show, in fact, that a pseudodifferential operator $T$ with symbol satisfying \eqref{intro11} displays a matrix decay 
\begin{equation}\label{intro13}
 |\langle T \pi(\lambda)g,\pi(\mu)g\rangle|\leq C \exp\big(-\epsilon|\mu-\lambda|^r)
\end{equation}
for some $C,\epsilon>0$, with $r=\min\{2,1/s\}$, for suitable windows $g$ (see Theorem \ref{equivdiscr-cont} below). In the case of the Gevrey-analytic symbols, i.e.\ $s\geq1$ in \eqref{intro11} and $r\leq 1$ in \eqref{intro13}, the estimates follow from the results in \cite{GR}. Instead, in view of the applications, our attention will be mainly addressed to the ultra-analytic case, i.e.\ $0\leq s<1$ in \eqref{intro11}, $r>1$ in \eqref{intro13}. Incidentally we note that the class of ultra-analytic functions exhibits exotic behaviors (e.g.\  it is not closed by composition), which make it amazing in many respects; we will see an instance at once.  \par
In fact, it turns out that the symbol $\sigma_t(\xi)$ in \eqref{intro7} satisfies the estimate \eqref{intro11} with $s=0$. This is already a curious remark even in dimension $1$, because the function  $\sin (2\pi |\xi|t)$ satisfies those estimates, e.g.\  for $|\xi|\geq1$, but $|\xi|^{-1}$ does not, for any $s<1$. It is really surprising that a similar miraculous combination occurs for a wide class of equations of the form \eqref{op000}.\par To state the precise result, we refine the forward Hadamard-Petrowsky condition by requiring that the symbol $P(i\tau,\zeta)$ of the operator in \eqref{op000} satisfies 
\[
(\tau,\zeta)\in\mathbb{C}\times\mathbb{C}^d,\quad P(i\tau,\zeta)=0 \Longrightarrow {\rm Im}\, \tau\geq -C(1+|{\rm Im}\,\zeta|)^\nu,
\]
for some $C>0$, $\nu\geq 1$. Then we will prove that the propogator $Tu=E(t,\cdot)\ast u$, where $E(t,\cdot)$ is the forward fundamental solution, has symbol $\sigma(t,\cdot)=\widehat{E}(t,\cdot)$ satisfying \eqref{intro11} with $s=1-1/\nu$, and therefore \eqref{intro13} holds with $r=\min\{2, \nu/(\nu-1)\}$, for suitable windows (Theorem \ref{teo5.2}). We always have $r>1$, i.e.\ super-exponential decay.\par
As a special case, we get Gaussian decay ($r=2$) for {\it all} hyperbolic equations. As another example, we get $r=2k/(2k-1)$, $k\in\bN\setminus\{0\}$, for the generalized heat equation:
\begin{equation}\label{intro12}
\partial_t u+(-\Delta_x)^ku=0.
\end{equation}
Accordingly, every column or row of the Gabor matrix, rearranged in decreasing order, displays a similar decay, i.e.\ we obtain an {\it exponential-type sparsity}.\par As an easy byproduct, we have the continuity of the propagator on modulation spaces (\cite{grochenig,baoxiang} and Section 2.4 below), which for the wave and Klein-Gordon equations was already proved in \cite{bertinoro2,bertinoro3,bertinoro12,bertinoro58bis} by other methods.
Actually our result for hyperbolic equations is of particular interest when the operator is only {\it weakly} hyperbolic, where we are not aware of any almost-diagonalization result in the literature (even super-polynomial). In this respect, there is a very small intersection with the wide literature, mentioned above, of curvelet-type representations, which instead deals with variable coefficient {\it strictly} hyperbolic equations (and super-polynomial matrix decay). 

\par
The plan of the paper is the following. In Section 2 we provide the necessary preliminaries, concerning Gelfand-Shilov spaces, time-frequency representations and modulation spaces. Section 3 is devoted to the study of the symbols in \eqref{intro11}. The corresponding PSDOs are considered in Section 4, where we prove \eqref{intro13} and give boundedness results in modulation and Gelfand-Shilov spaces. The applications to evolution equations are given in Section 5.

 \section{Preliminaries}
\subsection{Notations}
The Schwartz class is denoted by
$\sch(\Ren)$, the space of tempered
distributions by  $\sch'(\Ren)$.   We
use the brackets  $\la f,g\ra$ to
denote the extension to $\sch '
(\Ren)\times\sch (\Ren)$ of the inner
product $\la f,g\ra=\int f(t){\overline
{g(t)}}dt$ on $L^2(\Ren)$.

Euclidean norm of $ x \in {\bR}^d $ is given by $ |x| = \left( x_1 ^2 + \dots +x_d
^2 \right) ^{1/2}, $ and $ \langle x \rangle = ( 1 + |x|^2 )^{1/2}.$ We write $xy=x\cdot y$ for  the scalar product on
$\Ren$, for $x,y \in\Ren$.\par
We adopt the usual multi-index notation and recall that if $\alpha\in\bN^d$ we have
\begin{equation}\label{2alfa}
\sum _{\beta \leq \alpha} {\alpha \choose \beta}=2^{|\a|},
\end{equation}
and
\begin{equation}\label{eqn:39}
|\a|!\leq d^{|\a|}\a!.
\end{equation}
 The Fourier
transform is normalized to be ${\hat
  {f}}(\o)=\Fur f(\o)=\int
f(t)e^{-2\pi i t\o}dt$.

Translation and modulation operators, $T$ and $M$ are defined by
$$
T_x f(\cdot) = f(\cdot - x) \;\;\; \mbox{ and } \;\;\;
 M_x f(\cdot) = e^{2\pi i x \cdot} f(\cdot), \;\;\; x \in {\bR}^d.
$$
For $z=(x,\xi)$ we shall also write
\[
\pi(z)f=M_{\xi} T_x f.
\]
The following relations hold
$$
M_y T_x  = e^{2\pi i x  y } T_x M_y, \;\;
 (T_x f)\hat{} = M_{-x} \hat f, \;\;
  (M_x f)\hat{} = T_{x} \hat f, \;\;\;
  x,y \in {\bR}^d,  f,g \in L^2 ({\bR}^d).
$$

For $0< p\leq\infty$ and a  weight $m$, the space $\ell^{p}_m
(\Lambda )$
 is  the
space of sequences $a=\{{a}_{\lambda}\}_{\lambda \in \Lambda }$
on a  lattice $\Lambda$, such that
$$\|a\|_{\ell^{p}_m}:=\left(\sum_{\lambda\in\Lambda}
|a_{\lambda}|^p m(\lambda)^p\right)^{1/p}<\infty
$$
(with obvious changes when $p=\infty$).

Throughout the paper, we
shall use the notation
$A\lesssim B$ to express the inequality
$A\leq c B$ for a suitable
constant $c>0$, and  $A
\asymp B$  for the equivalence  $c^{-1}B\leq
A\leq c B$.

\subsection{Gelfand-Shilov Spaces}

The Schwartz class $\cS(\rd)$ does not give quantitative information about how fast a function $f\in \cS(\rd)$ and its derivatives decay at infinity.
This is the main motivation to use subspaces of the Schwartz class, so-called Gelfand-Shilov type spaces, introduced in  \cite{GS}. Let us recall their definition and main properties, contained in \cite{GS,NR}.

\begin{definition} Let there be given $ s, r \geq0$ and $A,B>0$.
The Gelfand-Shilov type space $ S^{s,A} _{r,B} (\rd) $ is defined by
\begin{equation}\label{GFdef}
 S^{s,A} _{r,B} (\rd)=\{f\in\cS(\rd)\,:\,|x^\a\partial^\beta f(x)| \lesssim A^{|\a|}B^{|\beta|}(\a!)^r(\beta!)^s,\quad \a,\beta\in\bN^d\}.
\end{equation}
Their projective and inductive limits are denoted by
$$\Sigma^s_r={\rm proj}\lim_{A>0,B>0}S^{s,A} _{r,B};\quad S^s_r={\rm ind}\lim_{A>0,B>0}S^{s,A} _{r,B}.
$$
\end{definition}
The space $S^{s} _{r}(\rd) $ is nontrivial if and only if $ r + s > 1,$ or $ r + s = 1$ and $r, s > 0$.  So the smallest nontrivial space with $r=s$ is provided by $S^{1/2}_{1/2}(\rd)$. Every function of the type $P(x)e^{-a|x|^2}$, with $a>0$ and $P(x)$ polynomial on $\rd$, is in the class $S^{1/2}_{1/2}(\rd)$.  We observe the trivial inclusions $S^{s_1} _{r_1}(\rd)\subset S^{s_2} _{r_2}(\rd)$ for $s_1\leq s_2$ and $r_1\leq r_2$.
Moreover, if $f\in S^{s} _{r}(\rd)$, also $x^{\delta}\partial^\gamma f$ belongs to the same space for every fixed $\delta,\gamma$. \par The action of the Fourier transform on $S^{s} _{r}(\rd) $ interchanges the indices $s$ and $r$, as explained in the following theorem.

\begin{theorem} For $f\in \cS(\rd)$ we have $f\in S^{s} _{r}(\rd) $ if and only if $\hat{f}\in S^{r}_{s}(\rd).$
\end{theorem}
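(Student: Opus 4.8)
The plan is to prove both implications at once by showing that membership in $S^s_r(\rd)$ is equivalent to a decay/smoothness estimate on the short-time Fourier transform side, which is visibly symmetric under the Fourier transform. However, a more elementary route — the one I would actually carry out here — is to argue directly from the defining seminorm estimates, using the elementary identities $\widehat{\partial^\beta f}(\xi)=(2\pi i\xi)^\beta\widehat f(\xi)$ and $\widehat{x^\alpha f}(\xi)=(2\pi i)^{-|\alpha|}\partial^\alpha\widehat f(\xi)$. Thus, first I would reformulate \eqref{GFdef}: for $f\in\cS(\rd)$ one has $f\in S^s_r(\rd)$ precisely when there are constants $A,B>0$ with $\|x^\alpha\partial^\beta f\|_\infty\lesssim A^{|\alpha|}B^{|\beta|}(\alpha!)^r(\beta!)^s$ for all $\alpha,\beta\in\bN^d$, and this passes to the union over $A,B$, i.e.\ to the class $S^s_r$.

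The key step is to transfer such an estimate under $\Fur$. Apply the Fourier transform to $x^\alpha\partial^\beta f$: we get $\Fur(x^\alpha\partial^\beta f)(\xi)=(2\pi i)^{-|\alpha|}(2\pi i)^{|\beta|}\partial^\alpha(\xi^\beta\widehat f)(\xi)$ up to the obvious constants. To control $\|\xi^\beta\partial^\alpha\widehat f\|_\infty$ I would instead run the computation in the other direction: write $\xi^\beta\partial^\alpha\widehat f=(2\pi i)^{|\alpha|}(2\pi i)^{-|\beta|}\Fur\big((-x)^\alpha\cdot(\text{stuff})\big)$; more cleanly, use that $\|\hat g\|_\infty\le\|g\|_{L^1}$ together with the standard fact that on $\cS$ one has $\|g\|_{L^1}\lesssim \sup_{|\gamma|\le d+1}\|\langle x\rangle^{d+1}\partial^\gamma?\|$ — but to keep the factorial bookkeeping clean it is better to use the $L^1$ bound $\|\xi^\beta\partial^\alpha\widehat f\|_\infty=\|\Fur(\text{derivative of }x^\alpha f)\|_\infty\le\|(1+|x|)^{d+1}\cdot(\partial^\gamma \text{ of } x^\alpha f)\|_\infty\cdot C_d$, expand by the Leibniz rule, and collect: the derivatives falling on $x^\alpha$ produce at most $\binom{\alpha}{\gamma'}\frac{\alpha!}{(\alpha-\gamma')!}\le 2^{|\alpha|}\alpha!$ type factors, the derivatives on $f$ give a $\partial^{\beta+\gamma''}f$ contributing $(\beta+\gamma'')!^s\le (2^{|\beta|+|\gamma''|}\beta!\,\gamma''!)^s$, and the extra polynomial weight $(1+|x|)^{d+1}$ is absorbed into a fixed power of the first-index seminorms. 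Summing the finitely many ($\le C_d$) Leibniz terms and using \eqref{eqn:39} to pass between $|\alpha|!$ and $\alpha!$, one obtains $\|\xi^\beta\partial^\alpha\widehat f\|_\infty\lesssim \tilde A^{|\beta|}\tilde B^{|\alpha|}(\beta!)^r(\alpha!)^s$ with new constants $\tilde A,\tilde B$ depending only on $A,B,d$. This is exactly the statement that $\widehat f\in S^r_s(\rd)$, i.e.\ the indices have been swapped.

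The converse implication is then immediate: apply what we have just proved to $\widehat f$ in place of $f$, getting $\widehat{\widehat f}\in S^s_r(\rd)$, and observe that $\widehat{\widehat f}(x)=f(-x)$, which lies in $S^s_r$ iff $f$ does since the defining estimates are invariant under $x\mapsto -x$. The main obstacle — and really the only delicate point — is the factorial bookkeeping in the Leibniz expansion: one must be careful that the (fixed, $d$-dependent) number of terms and the absorbed weight $(1+|x|)^{d+1}$ do not spoil the clean $(\alpha!)^r(\beta!)^s$ form, and here the combinatorial inequalities \eqref{2alfa} and \eqref{eqn:39} recorded in the Notations subsection are exactly what is needed. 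Note also that the argument respects both the inductive and projective limits: in the projective case one simply checks that for every $\tilde A,\tilde B>0$ there exist admissible $A,B$ giving the required bound, so the equivalence holds verbatim for $\Sigma^s_r$ as well as for $S^s_r$.
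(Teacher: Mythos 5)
The paper itself gives no proof of this theorem: it is recalled from \cite{GS,NR} as a known property of Gelfand--Shilov spaces, so there is no in-paper argument to compare against. Your skeleton (exchange multiplication and differentiation under $\Fur$, bound $\|\Fur g\|_\infty\le\|g\|_{L^1}\lesssim\sup_x(1+|x|)^{d+1}|g(x)|$, expand by Leibniz, and get the converse from $\widehat{\widehat f\,}(x)=f(-x)$) is indeed the standard and correct route. But the central estimate --- which you yourself single out as the only delicate point --- is not carried out correctly, and your final display asserts the wrong conclusion. By \eqref{GFdef}, $\hat f\in S^r_s(\rd)$ means $|\xi^\beta\partial^\alpha\hat f(\xi)|\lesssim \tilde A^{|\beta|}\tilde B^{|\alpha|}(\beta!)^{s}(\alpha!)^{r}$: powers of $\xi$ must carry the \emph{lower} index $s$ and derivatives the \emph{upper} index $r$. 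Your bound $(\beta!)^{r}(\alpha!)^{s}$ is literally the statement $\hat f\in S^s_r$, i.e.\ the indices have not been swapped at all; since the entire content of the theorem is which index ends up where, this cannot be dismissed without redoing the bookkeeping. Moreover the intermediate description is off: no term $\partial^{\beta+\gamma''}f$ occurs (the weight $(1+|x|)^{d+1}$ is a multiplication, not extra differentiation), and the Leibniz coefficient is $\binom{\beta}{\gamma}\frac{\alpha!}{(\alpha-\gamma)!}$, not $\binom{\alpha}{\gamma'}\frac{\alpha!}{(\alpha-\gamma')!}$.

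The quantitative gap is this: in $\xi^\beta\partial^\alpha\hat f=c_{\alpha\beta}\,\Fur\big(\partial^\beta(x^\alpha f)\big)$ the Leibniz term is $\binom{\beta}{\gamma}\frac{\alpha!}{(\alpha-\gamma)!}\,x^{\alpha-\gamma}\partial^{\beta-\gamma}f$, and the hypothesis applied to $x^{\alpha-\gamma}\partial^{\beta-\gamma}f$ (after absorbing the weight) contributes $((\alpha-\gamma)!)^r((\beta-\gamma)!)^s$. Bounding the coefficient crudely by $2^{|\alpha|}\alpha!$, as you propose, therefore yields an $\alpha$-content of order $(\alpha!)^{1+r}$, which is not $\lesssim C^{|\alpha|}(\alpha!)^r$ when $r<1$ --- precisely the ultra-analytic range this paper is about. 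The repair is to keep the quotient paired with the hypothesis factor: $\frac{\alpha!}{(\alpha-\gamma)!}\,((\alpha-\gamma)!)^r=(\alpha!)^r\big(\tfrac{\alpha!}{(\alpha-\gamma)!}\big)^{1-r}\le C^{|\alpha|}(\alpha!)^r(\gamma!)^{1-r}$, then use $\gamma\le\beta$ and the nontriviality condition $r+s\ge1$ to write $(\gamma!)^{1-r}\le(\gamma!)^s$ and absorb $(\gamma!)^s((\beta-\gamma)!)^s\le(\beta!)^s$ (the case $r+s<1$ is vacuous by the triviality criterion recalled in the paper). With this correction, and with \eqref{2alfa}, \eqref{eqn:39} for the remaining constants, one gets $\|\xi^\beta\partial^\alpha\hat f\|_\infty\lesssim\tilde A^{|\beta|}\tilde B^{|\alpha|}(\beta!)^s(\alpha!)^r$, i.e.\ genuinely $\hat f\in S^r_s(\rd)$; your converse via $\widehat{\widehat f\,}(x)=f(-x)$ and reflection invariance is fine. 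Alternatively, the manifestly Fourier-symmetric characterization in Theorem \ref{simetria}, part b), gives the statement at once, but proving that characterization requires exactly the same care, so it does not let you bypass this bookkeeping.
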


Therefore for $s=r$ the spaces $S^{s} _{s}(\rd)$ are invariant under the action of the Fourier transform.

%
\begin{theorem} \label{simetria} Assume $ s>0, r>0, s+r\geq1$. For $f\in  \cS(\rd)$, the following conditions are equivalent:
\begin{itemize}
\item[a)] $f \in  S^{s} _{r}(\rd)$ .
\item[b)]  There exist  constants $A, B>0,$ such that
$$ \| x^{\a}  f \|_{L^\infty} \lesssim A^{|\a|} (\a !) ^{r}  \quad\mbox{and}\quad  \| \o^{\b}  \hat{f} \|_{L^\infty} \lesssim  B^{|\b|} ( \b!) ^{s},\quad \a,\b\in \bN^d. $$
\item[c)]  There exist  constants $A, B>0,$ such that
$$ \| x^{\a}  f \|_{L^\infty} \lesssim A^{|\a|} (\a !) ^{r}  \quad\mbox{and}\quad  \| \partial^{\b}  f \|_{L^\infty} \lesssim  B^{|\b|} ( \b!) ^{s},\quad \a,\b\in \bN^d. $$

\item[d)] There exist  constants $h, k>0,$ such that
$$
 \|f  e^{h  |x|^{1/r}}\|_{L^\infty} < \infty \quad\mbox{and}\quad \| \hat f  e^{k |\o|^{1/s}}\|_{L^\infty} < \infty.$$
\end{itemize}
\end{theorem}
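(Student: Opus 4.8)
The plan is to prove the cycle $a)\Rightarrow b)\Rightarrow d)\Rightarrow c)\Rightarrow a)$. The backbone of the argument is an elementary one--scale dictionary: for $g\in\cS(\rd)$ and $\rho>0$ one has $\sup_{\alpha}\|x^\alpha g\|_{L^\infty}/(A^{|\alpha|}(\alpha!)^{\rho})<\infty$ for some $A>0$ if and only if $\|g\,e^{h|x|^{1/\rho}}\|_{L^\infty}<\infty$ for some $h>0$ (and the same with $L^\infty$ replaced by $L^1$, the exponential weight making the two scales comparable). I would prove this lemma first: for the forward direction expand $|x|^{2n}=\sum_{|\alpha|=n}\frac{n!}{\alpha!}x^{2\alpha}$, use $(2\alpha)!\le 4^{|\alpha|}(\alpha!)^2$ together with \eqref{2alfa} and \eqref{eqn:39} to get $|x|^{2n}|g(x)|\le C D^n(n!)^{2\rho}$ for all $n$, and then optimise over $n$ by Stirling's formula to obtain $|g(x)|\lesssim e^{-h|x|^{1/\rho}}$; the reverse direction is the elementary bound $\sup_{t\ge0}t^{N}e^{-ht^{1/\rho}}\lesssim A^{N}(N!)^{\rho}$, again by Stirling (combined with $|x^\alpha|\le|x|^{|\alpha|}$ and \eqref{eqn:39}).

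Given the lemma, the three implications $a)\Rightarrow b)$, $b)\Rightarrow d)$, $d)\Rightarrow c)$ are short. For $a)\Rightarrow b)$: the first bound of $b)$ is \eqref{GFdef} with $\beta=0$; for the second, summing \eqref{GFdef} over $|\alpha|\le 2d$ gives $\langle x\rangle^{2d}|\partial^\beta f(x)|\lesssim B^{|\beta|}(\beta!)^s$, whence $\partial^\beta f\in L^1$ with $\|\partial^\beta f\|_{L^1}\lesssim B'^{|\beta|}(\beta!)^s$ and $\|\omega^\beta\hat f\|_{L^\infty}=(2\pi)^{-|\beta|}\|\widehat{\partial^\beta f}\|_{L^\infty}\le (2\pi)^{-|\beta|}\|\partial^\beta f\|_{L^1}$. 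For $b)\Rightarrow d)$: apply the lemma to $g=f$ with $\rho=r$ and to $g=\hat f$ with $\rho=s$. For $d)\Rightarrow c)$: the reverse direction of the lemma applied to $f$ gives the first bound of $c)$; for the second, the fast decay of $\hat f$ yields $\|\omega^\beta\hat f\|_{L^1}\lesssim B^{|\beta|}(\beta!)^s$ by the direct $\Gamma$--function estimate $\int_{\rd}|\omega|^n e^{-k|\omega|^{1/s}}\,d\omega\lesssim C^n(n!)^s$, and differentiating $f=\Fur^{-1}\hat f$ under the integral sign gives $\|\partial^\beta f\|_{L^\infty}\le(2\pi)^{|\beta|}\|\omega^\beta\hat f\|_{L^1}\lesssim B'^{|\beta|}(\beta!)^s$.

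The genuinely substantive step is closing the loop, $c)\Rightarrow a)$: promoting the two \emph{separate} bounds of $c)$ to the \emph{mixed} bound $\|x^\alpha\partial^\beta f\|_{L^\infty}\lesssim A^{|\alpha|}B^{|\beta|}(\alpha!)^r(\beta!)^s$ of $a)$. This is the only place the hypothesis $s+r\ge1$ is used. From the first bound of $c)$ and the lemma one has $|f(x)|\lesssim e^{-h|x|^{1/r}}$, hence also $\|\partial^\beta\hat f\|_{L^\infty}\lesssim A'^{|\beta|}(\beta!)^r$; together with $\|\partial^\beta f\|_{L^\infty}\lesssim B^{|\beta|}(\beta!)^s$ one estimates $x^\alpha\partial^\beta f$ through its Fourier representation, integrating by parts $|\alpha|$ times to write $x^\alpha\partial^\beta f(x)=(2\pi i)^{-|\alpha|}(-1)^{|\alpha|}\int_{\rd}\partial_\omega^\alpha[(2\pi i\omega)^\beta\hat f(\omega)]\,e^{2\pi i x\omega}\,d\omega$, expanding by the Leibniz rule, and absorbing the combinatorial factors $\binom{\alpha}{\gamma}\beta!/(\beta-\gamma)!$ via \eqref{2alfa}--\eqref{eqn:39}.

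The main obstacle is that this reduction leads back to mixed quantities for $\hat f$, so the estimate cannot be closed in one pass: one must run an induction on $|\alpha|+|\beta|$ alternating between the bounds available for $f$ and those for $\hat f$ — equivalently, interpolate the power--weight bound and the derivative bound on $\hat f$ against each other — and it is precisely the inequality $r+s\ge1$ that makes the two geometric scales of constants compatible, so the induction closes with a uniform constant. In the ultra-analytic range ($s<1$) an alternative is to use that $f$ extends to an entire function of finite order (read off from the decay of $\hat f$), apply Cauchy's estimates on circles of radius tuned to $|\beta|$, and invoke a Phragm\'en--Lindel\"of argument exploiting the decay of $f$ on $\rd$; again $r+s\ge1$ is exactly what guarantees the sector of decay is wide enough. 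Everything else is routine bookkeeping, and one may pass freely between the $L^\infty$, $L^1$ and $L^2$ norms, since on the classes at hand the bounds for consecutive powers are comparable.
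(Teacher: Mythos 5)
Your cycle $a)\Rightarrow b)\Rightarrow d)\Rightarrow c)$ is fine: the one-scale dictionary you state as a lemma is exactly Proposition \ref{equi} of the paper (proved there), and the three implications you derive from it are routine Fourier/$\Gamma$-function bookkeeping, correctly executed. Note that the paper itself gives no proof of Theorem \ref{simetria} — it is recalled from \cite{GS,NR} — so the only internal material to compare with is Proposition \ref{equi}, which covers your lemma but none of the hard content.

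The genuine gap is the closing implication $c)\Rightarrow a)$, which is precisely the nontrivial core of the theorem (the Gelfand--Shilov statement that separate control of the decay of $f$ and of the Gevrey growth of its derivatives implies the \emph{mixed} bounds, i.e.\ $S_r\cap S^s=S^s_r$ for $r+s\geq 1$; in full generality this is a theorem of Kashpirovskii). Your Fourier-representation attempt is, as you yourself observe, circular: after Leibniz you must bound $\|\omega^{\beta-\gamma}\partial^{\alpha-\gamma}\hat f\|_{L^1}$, and from $c)$ (plus the lemma) you only possess \emph{separate} bounds $\|\omega^{\beta}\hat f\|_{L^\infty}\lesssim B^{|\beta|}(\beta!)^{s}$ and $\|\partial^{\alpha}\hat f\|_{L^\infty}\lesssim A'^{|\alpha|}(\alpha!)^{r}$, which is the same problem transposed to $\hat f$. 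At this point you assert that ``an induction on $|\alpha|+|\beta|$ alternating between the bounds'' closes, and that $r+s\geq1$ ``makes the two geometric scales compatible,'' but no inductive step is formulated and no mechanism is exhibited by which the constants remain of the form $A^{|\alpha|}B^{|\beta|}(\alpha!)^r(\beta!)^s$ after each alternation; an unquantified back-and-forth of this kind does not converge by itself, and the place where $r+s\geq1$ actually enters is never shown. The classical proofs supply exactly what is missing here: either Landau--Kolmogorov-type interpolation inequalities between $\sup|f|$ and $\sup|\partial^{2\beta}f|$ applied on suitably shrinking cubes to manufacture the mixed bound with controlled constants, or (in the range $s<1$, which is the route you only name) the extension of $f$ to an entire function of finite exponential type read off from $\hat f$, followed by Phragm\'en--Lindel\"of in sectors whose opening is governed by $r+s\geq1$. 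Until one of these arguments is actually carried out, $c)\Rightarrow a)$ — and hence the equivalence — is not proved.
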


A suitable window class for weighted modulation spaces (see the subsequent Definition \ref{prva}) is the Gelfand-Shilov type space $\Sigma^1_1(\rd)$, consisting of  functions $f\in\cS(\rd)$ such that for {\it every}  constant $A>0$ and $B>0$
\begin{equation}\label{sigmadef}
|x^\a\partial^\beta f(x)| \lesssim A^{|\a|}B^{|\beta|}\a!\beta!,\quad \a,\beta\in\bN^d.
\end{equation}
We have $S^s_s(\rd)\subset\Sigma^1_1(\rd)\subset S^1_1(\rd)$ for every $s<1$.
Observe that the characterization of Theorem \ref{simetria} can be adapted to $\Sigma^1_1(\rd)$ by replacing the words ``there exist'' by ``for every'' and taking $r=s=1$.\par
Let us underline the following property, which exhibits  two equivalent ways of expressing the decay of a continuous function $f$ on $\rd$. This follows immediately from \cite[Proposition 6.1.5]{NR}, where the property was formulated for $f\in\cS(\rd)$. For the sake of clarity, we shall detail the proof showing the mutual dependence between the constants $\eps$ and $C$ below.
\begin{proposition}[\text{\cite[Proposition 6.1.5]{NR}}]\label{equi} Consider $r>0$ and let $h$ be a continuous function on $\rd$. Then the following conditions are equivalent:\\
(i) There exists a constant $\eps>0$ such that
\begin{equation}\label{eqn:A.7}
|h(x)|\lesssim \exp\big({-\eps |x|^{1/r}}\big),\quad x\in\rd,
\end{equation}
\noindent
(ii)  There exists a constant $C>0$ such that
\begin{equation}\label{powerdecay}
|x^\a h(x)|\lesssim C^{|\a|}(\a!)^{r},\quad x\in\rd, \,\a\in\bN^d.
\end{equation}
\end{proposition}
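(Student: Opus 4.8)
The plan is to prove the two implications separately, being careful to track how the constants $\eps$ and $C$ depend on each other, as promised in the statement. Throughout I will use the elementary inequality \eqref{eqn:39}, namely $|\a|!\leq d^{|\a|}\a!$, and its companion $\a!\leq|\a|!$, to pass freely between the multi-index factorial $\a!$ and the scalar factorial $|\a|!$; this only costs powers of $d$, which get absorbed into the implied constants and into $C$.

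\emph{Proof that (i) implies (ii).} Assume $|h(x)|\lesssim \exp(-\eps|x|^{1/r})$. Fix $\a\in\bN^d$ and $x\in\rd$. We estimate $|x^\a h(x)|\leq |x|^{|\a|}e^{-\eps|x|^{1/r}}\cdot(\text{const})$, using $|x^\a|\leq|x|^{|\a|}$. Setting $t=|x|\geq0$ and $n=|\a|$, it suffices to bound $\sup_{t\geq0} t^n e^{-\eps t^{1/r}}$. Substituting $u=\eps t^{1/r}$, i.e.\ $t=(u/\eps)^r$, this equals $\eps^{-rn}\sup_{u\geq0} u^{rn}e^{-u}=\eps^{-rn}\,\Gamma(rn+1)$. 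Now $\Gamma(rn+1)\lesssim (Cn)^{rn}\lesssim (C')^n (n!)^r$ by Stirling (with $C,C'$ absolute), so altogether $|x^\a h(x)|\lesssim (\eps^{-r}C')^{|\a|}(|\a|!)^r\lesssim \widetilde C^{|\a|}(\a!)^r$, where $\widetilde C$ is a fixed multiple of $\eps^{-r}$ (up to the factor $d^r$ from \eqref{eqn:39}). This gives \eqref{powerdecay} with $C\asymp \eps^{-r}$.

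\emph{Proof that (ii) implies (i).} Assume $|x^\a h(x)|\lesssim C^{|\a|}(\a!)^r$ for all $\a$. First reduce to one dimension: for any $x\in\rd$ there is a coordinate $x_j$ with $|x_j|\geq |x|/\sqrt d$, and taking $\a=(0,\dots,k,\dots,0)$ with the $k$ in the $j$-th slot gives $|x|^k\,|h(x)|\lesssim d^{k/2}|x_j|^k|h(x)|\lesssim (d^{1/2}C)^k (k!)^r$ for every $k\in\bN$. So with $C_1:=d^{1/2}C$ we have $|h(x)|\lesssim \inf_{k\geq0} C_1^k(k!)^r\,|x|^{-k}$. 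The standard way to turn such a bound into exponential decay is to choose $k$ near-optimally: roughly $k\approx (|x|/C_1)^{1/r}$. More cleanly, write $t=(|x|/C_1)^{1/r}$ and estimate, for the integer $k=\lfloor t\rfloor$, the quantity $C_1^k(k!)^r|x|^{-k}=(k!/t^{rk})^{\cdot}\cdots$; using $k!\leq k^k$ and $k\leq t$ one gets $C_1^k(k!)^r|x|^{-k}\leq (k/t)^{rk}\leq 1$, which alone is not enough, so instead I would sum a geometric-type series: from $|h(x)|\lesssim C_1^k(k!)^r|x|^{-k}$ for all $k$, and the inequality $(k!)^r\leq (rk)!\,$-type bounds, one shows $\sum_k |x|^k/(C_1^k(k!)^r)\lesssim \exp(c(|x|/C_1)^{1/r})$ for some absolute $c>0$ (this is the familiar fact that $\sum_k z^k/(k!)^r$ has order of growth $\exp(r z^{1/r})$ as $z\to+\infty$), whence $|h(x)|\lesssim \big(\sum_k |x|^k/(C_1^k(k!)^r)\big)^{-1}\cdot(\text{something})$ — more directly, pick $k=\lfloor \tfrac12(|x|/C_1)^{1/r}\rfloor$ and check that $(k!)^r C_1^k|x|^{-k}\lesssim 2^{-k}\lesssim \exp(-\eps|x|^{1/r})$ with $\eps$ a fixed multiple of $C_1^{-1/r}=C^{-1/r}d^{-1/(2r)}$. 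Either way one obtains \eqref{eqn:A.7} with $\eps\asymp C^{-1/r}$, consistently with the reciprocal relation found in the other direction.

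The only genuinely delicate point is the bookkeeping in (ii)$\Rightarrow$(i): one must verify that the near-optimal choice of $k$ is a legitimate integer (handle small $|x|$ separately, where the bound is trivial since $h$ is continuous hence bounded on compacts and the claimed exponential is bounded below away from zero there) and that Stirling's estimate is applied in the direction that does not lose more than a geometric factor. I expect the rest to be routine. I would present the argument so that the final statement records $\eps=c\,C^{-1/r}$ and, conversely, $C=c'\,\eps^{-r}$ for absolute constants $c,c'$ depending only on $d$ and $r$; this is exactly the mutual dependence the paper says it wants to make explicit.
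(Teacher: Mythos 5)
Your argument is correct, and it reaches the statement (including the announced mutual dependence $C\asymp\eps^{-r}$, $\eps\asymp C^{-1/r}$) by a genuinely different route than the paper. For (i)$\Rightarrow$(ii) you optimize directly, computing $\sup_{t\geq0}t^ne^{-\eps t^{1/r}}\leq \eps^{-rn}\Gamma(rn+1)$ and invoking Stirling, whereas the paper avoids Stirling altogether by observing that \eqref{eqn:A.7} is equivalent to the uniform boundedness of the terms of the series $\sum_n(\eps/r)^n(n!)^{-1}|x|^{n/r}|h(x)|^{1/r}$ and then reading off the termwise bound. For (ii)$\Rightarrow$(i) you reduce to a single coordinate via $|x|\leq\sqrt d\,\max_j|x_j|$ (which even improves the dimensional factor from the paper's $dC$ to $d^{1/2}C$, since the paper instead uses the multinomial inequality $|x|^n\leq\sum_{|\a|=n}\tfrac{n!}{\a!}|x^\a|$) and then choose the near-optimal index $k\approx\tfrac12(|x|/C_1)^{1/r}$, while the paper again runs the series trick, summing the termwise bounds against a damping factor $q^n$, $q\in(0,1)$, to exhibit $e^{\eps|x|^{1/r}}|h(x)|^{1/r}$ as a convergent series. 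Your approach buys explicit, quantitative control and is the more classical argument; the paper's buys brevity and a symmetric treatment of the two directions with the constants $(rd/\eps)^r$ and $\eps<r(dC)^{-1/r}$ falling out without any asymptotic analysis. Two cosmetic points you should clean up when writing this out: $\sup_{u\geq0}u^{rn}e^{-u}$ equals $(rn)^{rn}e^{-rn}$, which is bounded by (not equal to) $\Gamma(rn+1)$, and your near-optimal choice of $k$ gives $(k!/t^k)^r\leq 2^{-rk}$ rather than $2^{-k}$ (for $r<1$ the latter does not follow), but either bound yields $\exp(-\eps|x|^{1/r})$ with $\eps$ a constant multiple of $C^{-1/r}$; also note that the case $\a=0$ of \eqref{powerdecay} already gives global boundedness of $h$, so the small-$|x|$ regime needs no appeal to continuity.
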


\begin{proof}
We  re-write (\ref{eqn:A.7}) in the form
\begin{equation}\label{eqn:A.9}
\abs{h(x)}^{\frac{1}{r}} \lesssim \exp\big({-\frac{\epsilon}{r}\abs{x}^{\frac{1}{r}}}\big),\quad    x\in \rd.
\end{equation}
In turn, (\ref{eqn:A.9}) can be re-written as
\begin{equation}\label{eqn:A.10}
\sup_{x\in\rd}\sum_{n=0}^{\infty}\left(\frac{\epsilon}{r}\right)^n(n!)^{-1} \abs{x}^{\frac{n}{r}}\abs{h(x)}^{\frac{1}{r}}<\infty.
\end{equation}
Hence the
sequence of the terms of the
series is uniformly bounded,
as well as the sequence of
the $r$-th powers:
\[
\frac{\epsilon^{r n}}{r^{r n}}(n!)^{-r}\abs{x}^n\abs{h(x)},\quad n\in\bN,
\]
and we obtain
\[
\abs{x}^n\abs{h(x)}\lesssim \frac{r^{r n}}{\epsilon^{r n}}(n!)^r,\quad    x\in \rd,\ n=0,1,\dots.
\]
Hence, writing $\abs{\alpha}=n$ and applying (\ref{eqn:39}):
\[
\abs{x^\alpha h(x)}\lesssim \left({r d/\epsilon}\right)^{r|\a|}(\a!)^r = C^{\abs{\alpha}}(\alpha!)^r, \quad   x\in \rd,\ \alpha\in \mathbb{N}^d,
\]
where $C=\displaystyle{\left( {r d/\epsilon}\right)^r}$. Therefore (\ref{powerdecay}) is proved.\par
In the opposite direction, let (\ref{powerdecay}) be satisfied. Using the following inequalities
$$\abs{x}^n\leq \sum_{|\a|=n}\frac{n!}{\a!}|x^\a|,\quad \sum_{|\a|=n}\frac{n!}{\a!}=d^n,\quad\mbox{and}\quad \a!\leq |\alpha|!,$$ and the assumption \eqref{powerdecay} we obtain
\[
\abs{x}^n\abs{h(x)}\leq \sum_{\abs{\alpha}=n}\frac{n!}{\a!}\abs{x^\alpha h(x)}\lesssim\sum_{\abs{\alpha}=n}\frac{n!}{\a!}C^{|\a|}(\a!)^{r}\leq
C^n(n!)^r  d^n= (d C)^n (n!)^r,
\]
for every $x\in\rd$, $n\in\bN$.  Therefore the sequence
\[
(d C)^{-n}(n!)^{-r}\abs{x}^n\abs{h(x)},   \quad n\in\bN
\]
is uniformly bounded for $x\in \rd$, as well as the sequence
\[
(d C)^{-\frac{n}{r}}(n!)^{-1}\abs{x}^{\frac{n}{r}}\abs{h(x)}^{\frac{1}{r}}, \quad n\in\bN.
\]
If we choose $\epsilon= q (d C)^{-\frac{1}{r}}$, for a fixed $q\in (0,1)$, we conclude
\[
e^{\epsilon\abs{x}^{\frac{1}{r}}}\abs{h(x)}^{\frac{1}{r}}=\sum_{n=0}^{\infty} q^n(d C)^{-\frac{n}{r}}(n!)^{-1}\abs{x}^{\frac{n}{r}}\abs{h(x)}^{\frac{1}{r}}\lesssim \sum_{n=0}^{\infty} q^n.
\]
This is  (\ref{eqn:A.9}); hence we get (\ref{eqn:A.7}) and the proof is complete.
\end{proof}

It follows from this proof the  precise relation  between the constants  $\eps$ and $C$. Indeed,
assuming \eqref{eqn:A.7}, then \eqref{powerdecay} is satisfied with $C=\displaystyle{\left(r d/\eps\right)^r}$. Viceversa, \eqref{powerdecay}
implies \eqref{eqn:A.7} for any $\epsilon< r(d C)^{-{1}/{r}}$. The bound is sharp for $d=1$. Also, it follows from the proof that the constant implicit in the notation $\lesssim$ in \eqref{eqn:A.7} depends only on the corresponding one in \eqref{powerdecay} and viceversa.  \par

The strong dual spaces of $S^s_r(\rd)$ and  $\Sigma^1_1(\rd)$  are called spaces of tempered ultra-distributions and denoted by $(S^s_r)'(\rd)$ and $(\Sigma^1_1)'(\rd)$, respectively. Notice that they contain the space of tempered distribution $\cS'(\rd)$.\par
The spaces $S^s_r(\rd)$ are nuclear spaces \cite{mitjagin}, and this property yields a corresponding kernel theorem; cf.\ \cite{treves}. \begin{theorem}\label{kernelT} There exists an isomorphism between the space of linear continuous maps $T$ from $S^s_r(\rd)$ to $(S^s_r)'(\rd)$ and $(S^s_r)'(\rdd)$, which associates to every $T$ a kernel $K_T\in (S^s_r)'(\rdd)$ such that $$\la Tu,v\ra=\la K_T, v\otimes \bar{u}\ra,\quad \forall u,v \in S^s_r(\rd).$$ $K_T$ is called the kernel of $T$. \end{theorem}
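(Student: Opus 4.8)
The plan is to reduce the statement to the abstract Schwartz kernel theorem for nuclear spaces, as in \cite{treves}, once the appropriate tensor-product identification is available. Set $E=S^s_r(\rd)$. By \cite{mitjagin} the space $E$ is nuclear, and it is moreover a reflexive space of (DF)-type (a countable inductive limit of Banach spaces with compact linking maps), so that separately continuous bilinear forms on $E\times E$ are automatically jointly continuous. A continuous linear operator $T\colon E\to E'$ is in bijective correspondence with the continuous sesquilinear form $(u,v)\mapsto\la Tu,v\ra$ on $E\times E$, equivalently, after the obvious conjugation, with a continuous bilinear form; this correspondence is a topological isomorphism between $\cL(E,E')$ and the space $\cB(E\times E)$ of continuous bilinear forms on $E\times E$, both equipped with the topology of uniform convergence on bounded sets. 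By the defining property of the projective tensor topology, $\cB(E\times E)$ is canonically isomorphic to $(E\,\widehat{\otimes}_\pi\,E)'$.

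The second ingredient is the topological isomorphism
$$E\,\widehat{\otimes}_\pi\,E\;\cong\;S^s_r(\rdd),$$
realized by sending $u\otimes v$ to the function $(x,y)\mapsto u(x)v(y)$. Since $E$ is nuclear, the projective and injective tensor topologies coincide, so one may work with whichever is more convenient. Here two things must be checked: that the span of the pure tensors is dense in $S^s_r(\rdd)$, which follows from a routine cut-off and approximation argument (e.g.\ convolving and truncating with Gaussians, which lie in $S^{1/2}_{1/2}$); and that the natural family of seminorms on $S^s_r(\rdd)$ coming from \eqref{GFdef} is equivalent to the tensor seminorms on $E\otimes E$. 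The latter is a direct comparison: the estimate $|x^\a y^\b\partial^\ga_x\partial^\de_y f(x,y)|\lesssim A^{|\a|+|\b|}B^{|\ga|+|\de|}(\a!\,\b!)^r(\ga!\,\de!)^s$ characterizing $f\in S^s_r(\rdd)$ factorizes over the two groups of variables exactly as the cross-seminorms of the projective tensor product prescribe. (Alternatively, the tensorial behaviour of Gelfand--Shilov spaces under splitting of the variables can be quoted from the literature.)

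Combining the two steps, the map $T\mapsto K_T$, where $K_T\in (S^s_r)'(\rdd)$ is the unique ultra-distribution satisfying $\la K_T,v\otimes\bar u\ra=\la Tu,v\ra$ for all $u,v\in E$ (defined first on the dense subspace spanned by the tensors $v\otimes\bar u$, then extended by continuity), is the asserted isomorphism; the pairing formula in the statement holds by construction, and since every intermediate identification is a topological isomorphism, so is $T\mapsto K_T$ and its inverse. The inclusion $\cS'(\rd)\subset (S^s_r)'(\rd)$ noted above guarantees that the classical Schwartz kernels are recovered as a particular case.

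The main obstacle is the second step: showing that the canonical map $E\otimes E\to S^s_r(\rdd)$ extends to a topological isomorphism onto. The density part is standard, but matching the two independent families of constants $A,B$ together with the factorial weights $(\a!)^r$ and $(\b!)^s$ in \eqref{GFdef} against the projective (or injective) tensor seminorms requires some bookkeeping with the multi-index combinatorics. Everything downstream of that is a formal consequence of the nuclearity of $E$ via the kernel theorem of \cite{treves}.
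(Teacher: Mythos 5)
Your proposal follows essentially the same route as the paper, which offers no detailed proof but simply invokes the nuclearity of $S^s_r(\rd)$ from \cite{mitjagin} and the abstract kernel theorem for nuclear spaces from \cite{treves}; your argument just makes explicit the standard intermediate steps (identification of operators with bilinear forms on the (DFS)-space $S^s_r$, and the isomorphism $S^s_r(\rd)\,\widehat{\otimes}\,S^s_r(\rd)\cong S^s_r(\rdd)$). The one step you flag as needing bookkeeping — that the completed tensor product coincides topologically with $S^s_r(\rdd)$ — is indeed the only substantive point, and it is exactly what the cited literature supplies.
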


\subsection{Time-frequency representations.}  We recall the basic
concepts  of \tfa\ and  refer the  reader to \cite{grochenig} for the full
details. 
Consider a distribution $f\in\cS '(\rd)$
and a Schwartz function $g\in\cS(\rd)\setminus\{0\}$ (the so-called
{\it window}).
The short-time Fourier transform (STFT) of $f$ with respect to $g$ is $V_gf (z) = \langle f, \pi (z)g\rangle
$, $z=(x,\xi)\in\rd\times\rd$. The  \stft\ is well-defined whenever  the bracket $\langle \cdot , \cdot \rangle$ makes sense for
dual pairs of function or (ultra-)distribution spaces, in particular for $f\in
\cS ' (\rd )$ and $g\in \cS (\rd )$, $f,g\in\lrd$, $f\in
(\Sigma_1^1) ' (\rd )$ and $g\in \Sigma_1^1 (\rd )$ or $f\in
(S^s_r) ' (\rd )$ and $g\in S^s_r (\rd )$.
Let us recall the covariance formula for the \stft\  that will be used in
the sequel
  \begin{equation}
    \label{eql2}
   V_{ {g}}(M_\eta T_y{f})(x,\xi)  = e^{-2\pi
     i(\xi-\eta)y}(V_gf)(x-y,\xi-\eta), \qquad x, y,\o , \eta \in \Ren.
  \end{equation}

Another time-frequency representation we shall use is the (cross-)Wigner distribution of $f,g\in \lrd$, defined as
\begin{equation}\label{cross-wigner}
W(f,g)\phas=\intrd f\Big(x+\frac t2\Big){\overline{g\Big(x-\frac t2\Big)}} e^{-2\pi i t\o}\,dt.
\end{equation}
If we set $\breve{g}(t)=g(-t)$, then the relation between cross-Wigner distribution and short-time Fourier transform is provided by
\begin{equation}\label{wignerSTFT}
W(f,g)\phas=2^d e^{4\pi i x\o}V_{\breve{g}}f(2x,2\o).
\end{equation}

\par

For the discrete description of function spaces and operators we use
Gabor frames. Let $\Lambda=A\zdd$  with $A\in GL(2d,\R)$ be a lattice
of the time-frequency plane.
 The set  of
time-frequency shifts $\G(g,\Lambda)=\{\pi(\lambda)g:\
\lambda\in\Lambda\}$ for a  non-zero $g\in L^2(\rd)$ is called a
Gabor system. The set $\G(g,\Lambda)$   is
a Gabor frame, if there exist
constants $A,B>0$ such that
\begin{equation}\label{gaborframe}
A\|f\|_2^2\leq\sum_{\lambda\in\Lambda}|\langle f,\pi(\lambda)g\rangle|^2\leq B\|f\|^2_2,\qquad \forall f\in L^2(\rd).
\end{equation}
 If \eqref{gaborframe} is satisfied, then there exists a dual window $\gamma\in L^2(\rd)$, such that $\cG(\gamma,\Lambda)$ is a frame, and every $f\in L^2(\rd)$ possesses the frame expansions
 \[
 f=\sum_{\lambda\in\Lambda}\langle f,\pi(\lambda)g\rangle\pi(\lambda)\gamma=\sum_{\lambda\in\Lambda}\langle f,\pi(\lambda)\gamma\rangle \pi(\lambda)g
 \]
 with unconditional convergence in $L^2(\rd)$. In conclusion, we list some results about time-frequency analysis of Gelfand-Shilov functions, cf.\  \cite{elena07, medit,GZ,T2}:
 \begin{align}\label{zimmermann1}
 f,g\in S^s_s(\rd),\ s\geq1/2 &\Rightarrow V_g f\in S^s_s(\rdd),\\
 \label{wigner-gelfand}
 f,g\in S^s_s(\rd),\ s\geq1/2&\Rightarrow W(f,g)\in S^s_s(\rdd),\\
\label{wigner-gelfand2}
 f,g\in \Sigma_1^1(\rd) &\Rightarrow W(f,g)\in \Sigma^1_1(\rdd).
 \end{align}
 Finally, if $g\in S^s_s(\rd$), $s\geq1/2$, then
 \begin{equation}\label{zimmermann2}
   f\in S^s_s(\rd)\Longleftrightarrow |V_g(f)(z)|\lesssim \exp\big({-\epsilon |z|^{1/s}}\big)\ \mbox{for some} \,\,\epsilon>0.
 \end{equation}

\subsection{Modulation Spaces}
Weighted modulation spaces measure the decay of the STFT on the time-frequency (phase space) plane and were introduced by Feichtinger in the 80's \cite{F1} for weight of sub-exponential growth at infinity. The study of weights of exponential growth was developed in \cite{medit,T2}.

\emph{Weight Functions.}  In the sequel $v$ will always be a
continuous, positive,  even, submultiplicative   function
(submultiplicative weight), i.e., $v(0)=1$, $v(z) =
v(-z)$, and $ v(z_1+z_2)\leq v(z_1)v(z_2)$, for all $z,
z_1,z_2\in\rd.$  Submultiplicativity implies that $v(z)$ is \emph{dominated} by an exponential function, i.e.
\begin{equation} \label{weight}
 \exists\, C, k>0 \quad \mbox{such\, that}\quad  1\leq v(z) \leq C e^{k |z|},\quad z\in \rd.
\end{equation}

For example, every weight of the form
\begin{equation} \label{BDweight}
v(z) =   e^{s|z|^b} (1+|z|)^a \log ^r(e+|z|)
\end{equation}
 for parameters $a,r,s\geq 0$, $0\leq b \leq 1$ satisfies the
above conditions.\par

We denote by $\mathcal{M}_v(\rd)$ the space of $v$-moderate weights on $\rd$;
these  are measurable positive functions $m$ satisfying $m(z+\zeta)\leq C
v(z)m(\zeta)$ for every $z,\zeta\in\rd$.

\begin{definition}  \label{prva}
Given  $g\in\Sigma^1_1(\rd)$, a  weight
function $m\in\cM _v(\rdd)$, and $1\leq p,q\leq
\infty$, the {\it
  modulation space} $M^{p,q}_m(\Ren)$ consists of all tempered
ultra-distributions $f\in(\Sigma^1_1)' (\rd) $ such that $V_gf\in L^{p,q}_m(\Renn )$
(weighted mixed-norm spaces). The norm on $M^{p,q}_m(\rd)$ is
\begin{equation}\label{defmod}
\|f\|_{M^{p,q}_m}=\|V_gf\|_{L^{p,q}_m}=\left(\int_{\Ren}
  \left(\int_{\Ren}|V_gf(x,\o)|^pm(x,\o)^p\,
    dx\right)^{q/p}d\o\right)^{1/q}  \,
\end{equation}
(obvious changes if $p=\infty$ or $q=\infty$).
\end{definition}
We observe that for $f,g\in \Sigma^1_1(\rd)$ the above integral is convergent and thus $\Sigma^1_1(\rd)\subset M^{p,q}_m(\rd) $, $1\leq p,q \leq\infty$, cf.\ \ \cite{medit}, with dense inclusion when $p,q<\infty$, cf.\ \ \cite{elena07}. When $p=q$, we simply write $M^{p}_m(\rd)$ instead of $M^{p,p}_m(\rd)$. The spaces $M^{p,q}_m(\rd)$ are Banach spaces and every nonzero $g\in M^{1}_v(\rd)$ yields an equivalent norm in \eqref{defmod} and so $M^{p,q}_m(\Ren)$ is independent on the choice of $g\in  M^{1}_v(\rd)$.
\par We also recall the inversion formula for
the STFT (see  \cite[Proposition 11.3.2]{grochenig} and \cite[Proposition 2.6]{medit} for exponential weights): assume $g\in M^{1}_v(\rd)\setminus\{0\}$,
 $f\in M^{p,q}_m(\rd)$, then
\begin{equation}\label{invformula}
f=\frac1{\|g\|_2^2}\int_{\R^{2d}} V_g f(x,\o)M_\o T_x g\, dx\,d\o,
\end{equation}
and the  equality holds in $M^{p,q}_m(\rd)$ (observe that $M^{1}_v(\rd)\subset M^2(\rd)=L^2(\rd)$, so $\|g\|_2<\infty$).
\par
\section{Gevrey-analytic and ultra-analytic symbols}\label{tfc} In this section we characterize the smoothness and the growth of a function $f$ on $\rd$ in terms of the decay of its STFT $V_g f$, for a suitable window $g$. In the proofs we shall detail the relations among the constants $C_f,C_g,C_{f,g}$ and $\epsilon$ which come into play, since it could be useful for numerical purposes.
\begin{theorem}\label{teo1}
Consider $s>0$, $m\in \mathcal{M}_v(\rd)$, $g\in M^1_{v\otimes 1}(\rd)\setminus\{0\}$ such that for some $C_g>0$,
\begin{equation}\label{finestra}
\|\partial^\a g\|_{L^1_v(\rd)}\lesssim C_g^{|\a|}(\a!)^s,\quad\a\in\bN^d.
\end{equation}
For $f\in\cC^\infty(\rd)$ the following conditions are equivalent:\\
(i) There exists a constant $C_f>0$ such that
\begin{equation}\label{smoothf}
|\partial^\a f(x)|\lesssim m(x)C_f^{|\a|}(\a!)^s,\quad x\in\rd,\,\a\in\bN^d.
\end{equation}
\noindent
(ii)  There exists a constant $C_{f,g}>0$ such that
\begin{equation}\label{STFTf}
|\o^\a V_gf\phas|\lesssim m(x)C_{f,g}^{|\a|}(\a!)^s,\quad \phas\in\rdd,\,\a\in\bN^d.
\end{equation}
(iii)  There exists a constant $\eps>0$ such that
\begin{equation}\label{STFTeps}
|V_gf\phas|\lesssim m(x)\exp\big({-\eps|\o|^{1/s}}\big),\quad \phas\in\rdd,\,\a\in\bN^d.
\end{equation}
\end{theorem}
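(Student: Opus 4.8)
The plan is to prove the cycle of implications (i) $\Rightarrow$ (ii) $\Rightarrow$ (iii) $\Rightarrow$ (i), exploiting the fact that differentiation in the frequency variable of $V_gf$ produces multiplication by a polynomial inside the defining integral, while differentiation in the time variable of $f$ can be traded against differentiation of the window by an integration by parts. Throughout, the weight $m$ enters only as a harmless $v$-moderate factor, since $g\in M^1_{v\otimes 1}$ guarantees the relevant integrals converge against $m$.

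\emph{(i) $\Rightarrow$ (ii).} I would start from the integral representation $V_gf(x,\o)=\int f(t)\overline{g(t-x)}e^{-2\pi i t\o}\,dt$ and observe that $\o^\a V_gf(x,\o)$ equals, up to the factor $(2\pi i)^{-|\a|}$, the STFT of $\partial^\a(f\cdot)$-type expression; more precisely, after integrating by parts $|\a|$ times in $t$ one gets
\[
(2\pi i \o)^\a V_gf(x,\o)=\int \partial_t^\a\big(f(t)\overline{g(t-x)}\big)e^{-2\pi i t\o}\,dt,
\]
and then the Leibniz rule distributes the derivatives between $f$ and $g$. Using \eqref{smoothf} for the $f$-part, $|\partial^\beta f(t)|\lesssim m(t)C_f^{|\beta|}(\beta!)^s$, the $v$-moderateness $m(t)\leq C v(t-x)m(x)$, and \eqref{finestra} for the $g$-part, $\|\partial^{\a-\beta}g\|_{L^1_v}\lesssim C_g^{|\a-\beta|}((\a-\beta)!)^s$, the $t$-integral is bounded by $m(x)$ times
\[
\sum_{\beta\leq\a}\binom{\a}{\beta}C_f^{|\beta|}(\beta!)^s\,C_g^{|\a-\beta|}((\a-\beta)!)^s.
\]
Since $(\beta!)^s((\a-\beta)!)^s\leq(\a!)^s$ and $\sum_{\beta\leq\a}\binom{\a}{\beta}C_f^{|\beta|}C_g^{|\a-\beta|}=(C_f+C_g)^{|\a|}$ (or use \eqref{2alfa} with a crude bound), this yields \eqref{STFTf} with $C_{f,g}\asymp C_f+C_g$ — modulo the factor $(2\pi)^{-|\a|}$, which only improves the constant.

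\emph{(ii) $\Rightarrow$ (iii).} This is exactly Proposition \ref{equi} applied, for each fixed $x$, to the function $\o\mapsto V_gf(x,\o)/m(x)$ on $\rd$ (with $r=s$ there): \eqref{STFTf} is precisely the power-weighted bound \eqref{powerdecay}, and Proposition \ref{equi} converts it into the exponential decay \eqref{eqn:A.7}, which is \eqref{STFTeps}. One must note that the constant $\eps$ produced depends only on $C_{f,g}$ and the dimension $d$ (as recorded after the proof of Proposition \ref{equi}), and in particular is uniform in $x$; the implicit constant in $\lesssim$ is likewise uniform.

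\emph{(iii) $\Rightarrow$ (i).} Here I would use the STFT inversion formula \eqref{invformula}: with $g$ itself (or a dual window) one writes $f=\frac1{\|g\|_2^2}\int_{\rdd}V_gf(x,\o)M_\o T_x g\,dx\,d\o$, differentiate under the integral sign, so that $\partial^\a f(y)=\frac1{\|g\|_2^2}\int V_gf(x,\o)\,\partial_y^\a\big(e^{2\pi i y\o}g(y-x)\big)\,dx\,d\o$, and again expand by Leibniz: the worst term is $\sum_{\beta\leq\a}\binom{\a}{\beta}(2\pi\o)^\beta$ times $\partial^{\a-\beta}g(y-x)$. Estimating $|V_gf(x,\o)|\lesssim m(x)\exp(-\eps|\o|^{1/s})$, bounding $m(x)\leq Cv(y-x)m(y)$, and using $|\partial^{\a-\beta}g(y-x)|\leq\|\partial^{\a-\beta}g\|_{L^\infty}$ controlled via \eqref{finestra} (an $L^1_v$ bound on derivatives of $g$ certainly controls their $L^\infty$ norm after we absorb $v$), the $x$-integral is finite and the $\o$-integral is $\int |\o|^{|\beta|}e^{-\eps|\o|^{1/s}}d\o\lesssim D^{|\beta|}(\beta!)^s$ by the standard Gamma-function estimate. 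Summing over $\beta$ with \eqref{2alfa} and $((\a-\beta)!)^s(\beta!)^s\leq(\a!)^s$ gives \eqref{smoothf} with $C_f\asymp C_g+D$.

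\emph{Main obstacle.} The only genuinely delicate point is bookkeeping of \emph{uniformity}: in (i)$\Rightarrow$(ii) and (iii)$\Rightarrow$(i) one must check that the factor $m$ comes out cleanly — this is where $v$-moderateness and the hypothesis $g\in M^1_{v\otimes1}$ with the weighted derivative bound \eqref{finestra} are used, and one should make sure the constant $C$ appearing there (rather than, say, a $v(x)$-dependent quantity) really is absorbed. The analytic estimates themselves — integration by parts, Leibniz, the Gamma-function bound $\int|\o|^N e^{-\eps|\o|^{1/s}}d\o\lesssim(\text{const})^N(N!)^s$, and the factorial inequality $(\beta!)^s((\a-\beta)!)^s\leq(\a!)^s$ — are routine, and the passage (ii)$\Leftrightarrow$(iii) is simply a citation of Proposition \ref{equi} applied in the $\o$ variable uniformly in $x$.
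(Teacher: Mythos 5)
Your steps (i)$\Rightarrow$(ii) and (ii)$\Leftrightarrow$(iii) coincide with the paper's argument: the same Leibniz/integration-by-parts computation with $v$-moderateness and \eqref{finestra} for the first, and the same pointwise-in-$x$ application of Proposition \ref{equi} (with $r=s$, $h(\o)=V_gf(x,\o)/m(x)$) for the second. Your choice to close the cycle via (iii)$\Rightarrow$(i) rather than the paper's (ii)$\Rightarrow$(i) is a legitimate variant: the Gamma-function bound $\int_{\rd}|\o|^{N}e^{-\eps|\o|^{1/s}}\,d\o\lesssim D^{N}(N!)^{s}$ plays the role of the paper's trick of inserting $\frac{1+|\o|^{d+1}}{1+|\o|^{d+1}}$ and invoking the power bounds \eqref{STFTf} with $|\gamma|\le d+1$ extra powers.

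However, as written your (iii)$\Rightarrow$(i) step fails at the $x$-integral. After using $|V_gf(x,\o)|\lesssim m(x)e^{-\eps|\o|^{1/s}}$ and $m(x)\lesssim v(y-x)m(y)$, you discard the window factor by bounding $|\partial^{\a-\beta}g(y-x)|\le\|\partial^{\a-\beta}g\|_{L^\infty}$ and claim the $x$-integral is finite. What is left in the $x$-variable is then $\int_{\rd}v(y-x)\,dx$, which diverges for every admissible $v$ (recall $v\ge1$), so the estimate collapses; moreover the parenthetical claim that an $L^1_v$ bound on $\partial^{\a-\beta}g$ ``certainly controls'' its $L^\infty$ norm is not true without spending further derivatives (a Sobolev-type argument). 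The correct move — and the one the paper makes in its (ii)$\Rightarrow$(i) proof — is to keep $|\partial^{\a-\beta}g(y-x)|$ inside the integral and pair it with $v(y-x)$, so that
\begin{equation*}
\int_{\rd}v(y-x)\,|\partial^{\a-\beta}g(y-x)|\,dx=\|\partial^{\a-\beta}g\|_{L^1_v}\lesssim C_g^{|\a-\beta|}\big((\a-\beta)!\big)^{s},
\end{equation*}
which is exactly what hypothesis \eqref{finestra} provides; this also justifies the differentiation under the integral sign in \eqref{invformula}. With that repair, your factorial bookkeeping ($(\beta!)^s((\a-\beta)!)^s\le(\a!)^s$, \eqref{2alfa}, and the Gamma estimate) goes through and yields \eqref{smoothf}.
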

If the equivalent conditions \eqref{smoothf}, \eqref{STFTf}, \eqref{STFTeps} are satisfied, we say that $f$ is a Gevrey symbol when $s>1$, an analytic symbol if $s=1$ and an ultra-analytic symbol when $s<1$.
\begin{proof}
$(i)\Rightarrow (ii)$. We can use Leibniz' formula and write
$$\displaystyle
\partial^{\alpha} (f T_xg) (t) = \sum _{\beta \leq \alpha} {\alpha \choose \beta}
\partial^{\alpha - \beta} f (t) T_x(\partial^{\beta} g) (t),\quad t\in\rd,\,\a\in\bN^d.
$$
Let us estimate the $L^1(\rd)$-norm of the products $\partial^{\alpha - \beta} f  T_x(\partial^{\beta} g)$. Using the positivity and $v$-moderateness of the weight $m$, H\"older inequality and the assumptions \eqref{smoothf} and \eqref{finestra},
\begin{align*}
\|\partial^{\alpha - \beta} f T_x(\partial^{\beta} g)\|_{L^1}&\leq \|\partial^{\alpha - \beta} f\|_{L^\infty_{1/m}}\|m T_x(\partial^{\beta} g)\|_{L^1}\\
&\lesssim C_f^{|\a-\b|}((\a-\b)!)^s \|v(\cdot-x)\partial^\b g(\cdot-x)\|_{L^1} m(x)\\
&\lesssim C_f^{|\a-\b|}((\a-\b)!)^s C_g^{|\b|}(\b!)^s m(x)\\
&\leq C^{|\a|} (\a!)^s m(x)
\end{align*}
where $C:=\max\{ C_f, C_g\}$ and we used $(\a-\b)! \b!\leq \a!$. These estimates tell us, in particular, that $V_g f\phas=\cF( f T_x g)(\o)$ is well-defined. We can exchange partial derivatives and Fourier transform as follows
\begin{equation*}
\o^\a V_g f\phas=\frac1{(2\pi i)^{|\a|}}\cF(\partial^\a (f T_x g))(\o)= \frac1{(2\pi i)^{|\a|}}\sum_{\b\leq\a} {\alpha \choose \beta} \cF(\partial^{\alpha - \beta} f T_x(\partial^{\beta} g))(\o).
\end{equation*}
Using $$|\cF(\partial^{\alpha - \beta} f T_x(\partial^{\beta} g))(\o)|\leq \|\cF(\partial^{\alpha - \beta} f T_x(\partial^{\beta} g))\|_{L^\infty}\leq \|\partial^{\alpha - \beta} f T_x(\partial^{\beta} g)\|_{L^1},$$
the majorizations above and \eqref{2alfa},
\begin{equation*}
|\o^\a V_g f\phas|\lesssim\frac{m(x)}{(2\pi)^{|\a|}}\sum _{\beta \leq \alpha} {\alpha \choose \beta}C^{|\a|}(\a!)^{s}
= \frac{m(x)}{\pi^{|\a|}}C^{|\a|}(\a!)^{s}.
\end{equation*}
This proves \eqref{STFTf}, with constant $\displaystyle C_{f,g}:=\frac{\max\{ C_f, C_g\}}{\pi}$.\par
$(ii)\Rightarrow (i)$. We use the inversion formula \eqref{invformula}, observing  that \eqref{STFTf} implies $f\in M^{\infty,1}_{m\otimes1}(\rd)$, so that the equality in \eqref{invformula} holds a.e. and we can assume that it holds everywhere since $f$ is smooth.  Let us  consider the partial derivatives of $f$ and exchange them with the integrals, the estimates below will provide the justification of this operation.
So, formally, we can write
\begin{equation*}
\partial^\a f(t)=\frac1{\|g\|_2^2}\int_{\R^{2d}} V_g f(x,\o) \partial^\a (M_\o T_x g)(t)\, dx\,d\o,\quad t\in\rd.
\end{equation*}
Using Leibniz' formula $ \partial^\a (M_\o T_x g)=\sum _{\beta \leq \alpha} {\alpha \choose \beta}(2\pi i \o)^{\b} M_{\o} T_x(\partial^{\a-\b}g)$
we estimate
\begin{equation*}
|\partial^\a f(t)|\leq \frac1{\|g\|_2^2}\sum _{\beta \leq \alpha} {\alpha \choose \beta}(2\pi)^{|\b|}   \int_{\R^{2d}}| V_g f(x,\o)\o^\b|\,\cdot |T_x\partial^{\a-\b} g(t)|\, dx\,d\o,\quad t\in\rd.
\end{equation*}
We set
$$I_{\a,\b}(t):= \int_{\R^{2d}}| V_g f(x,\o)\o^\b|\,\cdot |T_x\partial^{\a-\b} g(t)|\, dx\,d\o
$$
and prove that, for every fixed $t\in\rd$,   $I_{\a,\b}(t)$ are absolutely convergent integrals. Using $1+|\o|^{d+1}\leq c_d\sum_{|\gamma|\leq d+1}|\o^\gamma|$, where $c_d$ depends only on the dimension $d$, and  assumption \eqref{STFTf},
\begin{align*}
I_{\a,\b}(t) &= \int_{\R^{2d}}| V_g f(x,\o)\o^\b|\,\frac{1+|\o|^{d+1}}{1+|\o|^{d+1}} |T_x\partial^{\a-\b} g(t)|\, dx\,d\o\\
&\lesssim\int_{\R^{2d}} \sum_{|\gamma|\leq d+1} | V_g f(x,\o)\o^{\b+\gamma}|\,\frac{1}{1+|\o|^{d+1}} |T_x\partial^{\a-\b} g(t)|\, dx\,d\o\\
&\lesssim \sum_{|\gamma|\leq d+1} C_{f,g}^{|\b+\gamma|}((\b+\gamma)!)^s \left(\int_{\R^{d}} \frac{1}{1+|\o|^{d+1}}\,d\o \right) \left(\int_{\R^{d}} m(x)\, |T_x\partial^{\a-\b} g(t)|\, dx\right).
\end{align*}
Now, $(\b+\gamma)!\leq 2^{|\b|+|\gamma|}(\b!)(\gamma!)\leq 2^{d+1}(d+1)! 2^{|\b|}\b!$ and $C_{f,g}^{|\b+\gamma|}=C_{f,g}^{d+1}C_{f,g}^{|\b|}$ so, forgetting about the constants depending only on $d$, $f$ and $g$,
\begin{align*}
I_{\a,\b}(t) &\lesssim  C_{f,g}^{|\b|}2^{s\b}(\b!)^s \left(\int_{\R^{d}} m(x)\, |T_x\partial^{\a-\b} g(t)|\, dx\right)\\
&\lesssim  C_{f,g}^{|\b|}2^{s\b}(\b!)^s m(t) \left(\int_{\R^{d}} v(x-t) |\partial^{\a-\b} g(t-x)|\, dx\right)\\
&= C_{f,g}^{|\b|}2^{s\b}(\b!)^s m(t) \|\partial^{\a-\b} g\|_{L^1_v}.
\end{align*}
The estimate above and the assumption \eqref{finestra} on $g$ allow the following majorization:
\begin{align*}
|\partial^\a f(t)|&\lesssim \frac{m(t)}{\|g\|_2^2}\sum _{\beta \leq \alpha} {\alpha \choose \beta}(2\pi)^{|\b|} (2^{s}C_{f,g})^{|\b|}(\b!)^s  C_g^{|\a-\b|}((\a-\b)!)^s\\
&\leq \frac{m(t)}{\|g\|_2^2}(\a!)^s \max\{C_{f,g},C_g\}^{|\a|} 2^{s|\a|}(2\pi)^{|\a|} 2^{|\a|}\\
&\lesssim m(t) C_f^\a (\a!)^s,
\end{align*}
where $C_f:=2^{s+2}\pi\max\{C_{f,g},C_g\}$, and we used $(\b)!(\a-\b)!\leq\a!$ and \eqref{2alfa}.\par
\noindent
$(ii)\Leftrightarrow (iii)$. The equivalence is an immediate consequence of Proposition \ref{equi} and the subsequent remarks on the constants, where, for every fixed $x\in\rd$,  we choose $h(\o):=V_g f\phas/m(x)$ and $r=s$.
\end{proof}

A natural question is whether we may find window functions satisfying \eqref{finestra}. To this end, we recall the following characterization of Gelfand-Shilov spaces.
\begin{proposition}\label{pro3.2} Let $g\in\cS(\rd)$. We have $g\in S^s_r(\rd)$, with $s,r>0$, $r+s\geq1$, if and only if there exist constants $A>0$, $\epsilon>0$ such that
$$|\partial^\a g(x)|\lesssim A^{|\a|} (\a!)^s \exp\big({-\eps|x|^{1/r}}\big),\quad x\in\rd,\,\,\a\in\bN^d.
$$
We have $g\in \Sigma^1_1(\rd)$ if and only if, for every $A>0$, $\epsilon>0$,
$$|\partial^\a g(x)|\lesssim A^{|\a|} \a! \exp\big({-\eps|x|}\big),\quad \, x\in\rd,\,\,\a\in\bN^d.
$$
\end{proposition}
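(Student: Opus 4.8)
The plan is to deduce both implications from Proposition \ref{equi}, applied, for each fixed multi-index $\a$, to the continuous function $h=\partial^\a g$, with the parameter of that proposition taken to be the present $r$. The $\a$-dependent factor $A^{|\a|}(\a!)^s$ will simply be absorbed into the implicit constant of Proposition \ref{equi}; what makes this legitimate is the precise control of constants recorded after that proposition: the exponential rate produced there depends only on $C$, $d$, $r$ (one may take any $\eps<r(dC)^{-1/r}$), hence is independent of $\a$, and the constant hidden in $\lesssim$ in \eqref{eqn:A.7} depends linearly on the one in \eqref{powerdecay}, and viceversa. Thus the estimates obtained from Proposition \ref{equi} will be uniform in $\a$ up to the prefactor $A^{|\a|}(\a!)^s$.

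First I would treat the implication from the stated decay estimate to $g\in S^s_r(\rd)$. Fixing $\a$ and setting $h=\partial^\a g$, the hypothesis reads $|h(x)|\lesssim\exp(-\eps|x|^{1/r})$ with implicit constant $\leq M A^{|\a|}(\a!)^s$ for an $M$ independent of $\a$. Applying the implication $(i)\Rightarrow(ii)$ of Proposition \ref{equi} gives, for some $C>0$ depending only on $\eps$, $d$, $r$,
\[|x^\b\,\partial^\a g(x)|\lesssim A^{|\a|}(\a!)^s\,C^{|\b|}(\b!)^r,\qquad x\in\rd,\ \a,\b\in\bN^d,\]
with implicit constant independent of $\a,\b$. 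After relabelling the two multi-indices, this is exactly the defining family of estimates \eqref{GFdef} for $S^s_r(\rd)$; since moreover $g\in\cS(\rd)$ by hypothesis, we conclude $g\in S^s_r(\rd)$.

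For the converse, starting from $g\in S^s_r(\rd)$, \eqref{GFdef} provides $A,B>0$ with $|x^\b\partial^\a g(x)|\lesssim B^{|\a|}A^{|\b|}(\a!)^s(\b!)^r$. For fixed $\a$, the function $h=\partial^\a g$ then satisfies \eqref{powerdecay} with parameter $r$, constant $C=A$, and implicit constant $\lesssim B^{|\a|}(\a!)^s$, so the implication $(ii)\Rightarrow(i)$ of Proposition \ref{equi} yields, for every $\eps<r(dA)^{-1/r}$,
\[|\partial^\a g(x)|\lesssim B^{|\a|}(\a!)^s\exp\big(-\eps|x|^{1/r}\big),\qquad x\in\rd,\ \a\in\bN^d,\]
with implicit constant independent of $\a$; this is the asserted estimate, with $B$ in the role of $A$. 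I would then obtain the characterization of $\Sigma^1_1(\rd)$ by repeating the whole argument with $r=s=1$, replacing ``there exist constants'' by ``for every constant'' throughout, and using the version of Proposition \ref{equi} adapted to $\Sigma^1_1$ exactly as Theorem \ref{simetria} is adapted.

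The one point requiring care --- and hence the main, if modest, obstacle --- is the bookkeeping of constants: one must verify that the rate $\eps$ coming out of Proposition \ref{equi} does not depend on $\a$, and that the implicit $\lesssim$-constant is enlarged only by the factor $A^{|\a|}(\a!)^s$ when passing between the two formulations. Both facts are precisely what the remarks on constants following Proposition \ref{equi} guarantee, so no estimate beyond Proposition \ref{equi} is actually needed.
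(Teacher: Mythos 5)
Your proposal is correct and follows essentially the paper's own route: the paper handles the $\Sigma^1_1$ part exactly as you do, by applying Proposition \ref{equi} to $\partial^\a g$ with the factor $A^{|\a|}\a!$ absorbed and invoking the constant-tracking remarks after that proposition, while for the $S^s_r$ part it simply cites \cite[Proposition 6.1.7]{NR}, whose content is the same fixed-$\a$ application of the equivalence you spell out. Your version is just a self-contained rendering of that argument, with the uniformity in $\a$ of the rate $\eps$ and of the implicit constants correctly identified as the only point needing care.
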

\begin{proof} The first part of the statement is in \cite[Proposition 6.1.7]{NR}. For the second part, the assumption $g\in\Sigma^{1}_1(\rd)$ means that for every $A>0, B>0$,
$$|x^\b\partial^\a g(x)|\lesssim A^{|\a|}B^{|\b|} \a! \b!,\quad x\in\rd,\,\,\a,\,\b\in\bN^d.
$$
Therefore the function  $h=\displaystyle{\frac{\partial^\a g}{A^{|\a|}\a!}}$ satisfies \eqref{powerdecay} in Proposition \ref{equi} for every $C>0$, and the estimate \eqref{eqn:A.7} is then satisfied for every $\eps>0$ (see the observations after the proof of Proposition \ref{equi} for the uniformity of the constants). This gives the claim.
\end{proof}

Hence every $g\in S^s_r(\rd)$ with $s>0$, $0<r<1$, $s+r\geq 1$, satisfies \eqref{finestra} for every submultiplicative weight $v$ (see \eqref{weight}). The same holds true if $g\in\Sigma_1^1(\rd)$ and $s\geq1$.
\section{Almost Diagonalization for Pseudodifferential operators}\label{section4}
In this section we extend some results of \cite{GR} to the case of pseudodifferential operators having ultra-analytic symbols. Our result covers
also the Gevrey-analytic case which was already discussed in \cite{GR}.\par
The Weyl form $\sigma^w$ of a  pseudodifferential operator (the so-called Weyl operator or Weyl transform) with symbol $\sigma\phas$ on $\rdd$ is formally defined by
\begin{equation}\label{Weyl}
\sigma^w f (x)=\intrd \sigma\left(\frac{x+y}2,\xi\right)e^{2\pi i (x-y)\xi}f(y)\,dy d\xi.
\end{equation}
Using the Wigner distribution \eqref{cross-wigner}, we can recast the definition as follows
\begin{equation}\label{Weyl2}
\langle \sigma^w f,g\rangle=\langle \sigma, W(f,g)\rangle.
\end{equation}
Since for $f,g\in\cS(\rd)$ we have $W(f,g)\in \cS(\rdd)$, \eqref{Weyl2} defines for $\sigma\in \cS'(\rdd)$ a continuous map $\sigma^w:\cS(\rd)\to\cS'(\rd)$. The Schwartz kernel $K$ of $\sigma^w$ is given by
\begin{equation}\label{nucleo1}
K(x,y)=\Fur^{-1}_{\xi\to x-y} \sigma\Big(\frac{x+y}{2},\xi\Big)\in\cS'(\rdd).
\end{equation}
On the other hand, in view of the kernel theorem in $\cS-\cS'$, every linear continuous map from $\cS(\rd)$ to $\cS'(\rdd)$ can be represented by means of a kernel $K\in \cS'(\rdd)$, hence as in \eqref{Weyl}, \eqref{Weyl2} with symbol
\begin{equation}\label{nucleo2}
\sigma(x,\xi)=\Fur_{y\to\xi} K\Big(x+\frac{y}{2},x-\frac{y}{2} \Big).
\end{equation}
Same arguments are valid in Gelfand-Shilov classes with $s=r\geq1/2$ (see \cite{mitjagin,treves}). Namely, considering $\sigma\in (S^s_s)'(\rdd)$, $f,g\in S^s_s(\rd)$ in \eqref{Weyl}, \eqref{Weyl2}, we have $\sigma^w: S^s_s(\rd)\to (S^s_s)'(\rd)$ continously, in view of \eqref{wigner-gelfand}. The kernel $K$ of $\sigma^w$, given by \eqref{nucleo1}, belongs to $(S^s_s)'(\rdd)$. In the opposite direction, in view of Theorem \ref{kernelT}, every linear continuous map from $S^s_s(\rd)$ to $(S^s_s)'(\rd)$ can be represented in the form \eqref{Weyl}, \eqref{Weyl2}, with $\sigma\in (S^s_s)'(\rdd)$ given by \eqref{nucleo2}. The same holds for the couple of spaces $\Sigma^1_1,(\Sigma^1_1)'$.\par
The crucial relation between the action of the Weyl operator $\sigma^w$ on time-frequency shifts and the short-time Fourier transform of its symbol, contained in \cite[Lemma 3.1]{charly06} can now be extended to Gelfand-Shilov spaces  and their dual spaces as follows.
\begin{lemma}\label{lemma41} Consider $s\geq1/2$, $g\in S^s_s(\rd)$, $\Phi=W(g,g)$. Then, for $\sigma\in (S^s_s)'(\rdd)$,
\begin{equation}\label{311}
|\la\sigma^w \pi(z)g,\pi(w) g\ra|=\left|V_\Phi \sigma\left(\frac{z+w}2,j(w-z)\right)\right|=|V_\Phi\sigma(u,v)|
\end{equation}
and
\begin{equation}\label{312}
|V_\Phi \sigma(u,v)|=\left|\la\sigma^w \pi\left(u-\frac12 j^{-1}(v)\right)g,\pi\left(u+\frac12 j^{-1}(v)\right) g\ra\right|,
\end{equation}
where $j(z_1,z_2)=(z_2,-z_1)$.
Moreover, the same results hold true if we replace the space $S^s_s(\rd)$ with the space $\Sigma^1_1(\rd)$.
\end{lemma}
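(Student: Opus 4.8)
The plan is to follow the scheme of \cite[Lemma 3.1]{charly06}, the only new ingredient being that the Weyl--Wigner duality \eqref{Weyl2} and the STFT have to be read in the Gelfand--Shilov framework; once the relevant brackets are seen to be meaningful, the two identities fall out of the covariance of the cross-Wigner distribution under time-frequency shifts. First I would check that every object in \eqref{311} is well-defined. Since $g\in S^s_s(\rd)$ and this space is invariant under translations and modulations (hence under $\pi(z)$), both $\pi(z)g$ and $\pi(w)g$ belong to $S^s_s(\rd)$, so by \eqref{wigner-gelfand} we have $W(\pi(z)g,\pi(w)g)\in S^s_s(\rdd)$ and the bracket $\la\sigma,W(\pi(z)g,\pi(w)g)\ra$ makes sense for $\sigma\in(S^s_s)'(\rdd)$; moreover the duality \eqref{Weyl2} extends to this setting (as already observed just before the statement of the lemma), so that
\[
\la\sigma^w\pi(z)g,\pi(w)g\ra=\la\sigma,W(\pi(z)g,\pi(w)g)\ra .
\]
By \eqref{wigner-gelfand} again, $\Phi=W(g,g)\in S^s_s(\rdd)$, hence $\Phi$ is an admissible window and $V_\Phi\sigma(u,v)=\la\sigma,\pi(u,v)\Phi\ra$ is well-defined for $\sigma\in(S^s_s)'(\rdd)$.

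The heart of the matter is then the covariance of the cross-Wigner distribution: there is a constant $c=c(z,w)$ with $|c|=1$ such that
\[
W(\pi(z)g,\pi(w)g)=c\,\pi\!\left(\tfrac{z+w}{2},\,j(w-z)\right)\Phi ,\qquad j(z_1,z_2)=(z_2,-z_1).
\]
I would obtain this from the relation \eqref{wignerSTFT} between the Wigner distribution and the STFT together with the STFT covariance \eqref{eql2} (it is also recorded, in one form or another, in \cite{grochenig}); the explicit phase $c$ plays no role since only absolute values enter the statement. Substituting into the identity above and taking moduli, the factor $c$ disappears and one gets exactly \eqref{311}, that is
\[
|\la\sigma^w\pi(z)g,\pi(w)g\ra|=|\la\sigma,\pi(u,v)\Phi\ra|=|V_\Phi\sigma(u,v)|,\qquad u=\tfrac{z+w}{2},\quad v=j(w-z).
\]

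The second identity \eqref{312} is just \eqref{311} read in the reverse direction: the assignment $(z,w)\mapsto(u,v)=\big(\tfrac{z+w}{2},j(w-z)\big)$ is a linear bijection of $\rdd\times\rdd$, with inverse $z=u-\tfrac12 j^{-1}(v)$, $w=u+\tfrac12 j^{-1}(v)$ (solve $z+w=2u$, $w-z=j^{-1}(v)$), and plugging this back into \eqref{311} yields \eqref{312}. The case of $\Sigma^1_1(\rd)$ and $(\Sigma^1_1)'(\rdd)$ is handled verbatim, using \eqref{wigner-gelfand2} and the time-frequency shift invariance of $\Sigma^1_1$ in place of the $S^s_s$ facts. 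I do not expect a genuine analytic obstacle here: the closest thing to one is the bookkeeping needed to justify that \eqref{Weyl2}, the Wigner covariance identity, and the admissibility of the window $\Phi=W(g,g)$ all remain valid when Schwartz functions and tempered distributions are replaced by $S^s_s$ (resp. $\Sigma^1_1$) functions and their duals; this rests entirely on \eqref{wigner-gelfand}, \eqref{wigner-gelfand2} and the time-frequency shift invariance of these spaces, and is precisely what forces the restriction $s\ge 1/2$.
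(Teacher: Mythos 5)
Your proposal is correct and follows essentially the same route as the paper: the authors likewise only verify that $\Phi=W(g,g)\in S^s_s(\rdd)$ (resp.\ $\Sigma^1_1(\rdd)$) via \eqref{wigner-gelfand}, \eqref{wigner-gelfand2} so that the bracket $\la\sigma,\pi(u,v)\Phi\ra$ and the Weyl--Wigner duality \eqref{Weyl2} make sense in the Gelfand--Shilov setting, and then refer to the covariance argument of \cite[Lemma 3.1]{charly06}, which is exactly the Wigner-covariance computation you carried out explicitly.
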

\begin{proof}
Since $\Phi=W(g,g)\in S^s_s(\rdd)$ for $g\in S^s_s(\rd)$ the duality $\la \sigma,\pi(u,v)\Phi\ra_{(S^s_s)'\times S^s_s}$ is well-defined so that the short-time Fourier transform $V_\Phi \sigma(u,v)$ makes sense. The same pattern applies to the case $g\in\Sigma_1^1(\rd)$. The rest of the proof is analogous to \cite[Lemma 3.1]{charly06}.
\end{proof}

We now exhibit a  characterization of the operator $\sigma^w$ in terms of its continuous Gabor matrix. The symbol $\sigma$ belongs to the classes defined in Section 3, the dimension being now $2d$.
\begin{theorem}\label{CGelfandPseudo}  Let $s\geq1/2$, and $m\in\mathcal{M}_v(\rdd)$. If $1/2\leq s<1$ consider a window function $g\in S^s_s(\rd)$; otherwise, if $s\geq1$, assume either $g\in \Sigma^1_1(\rd)$, or $g\in S^s_s(\rd)$ and the following growth condition on the weight $v$:
\[
v(z)\lesssim \exp\big({\epsilon|z|^{1/s}}\big),\quad z\in \rdd,
\]
for every $\epsilon>0$. Then the following properties are equivalent for $\sigma\in\cC^\infty(\rdd)$:
\par {
(i)} The symbol $\sigma$ satisfies
\begin{equation}\label{simbsmooth} |\partial^\a \sigma(z)|\lesssim m(z) C^{|\a|}(\a!)^{s}, \quad \forall\, z\in\rdd,\,\forall \a\in\bN^{2d}.\end{equation}
{(ii)} There exists $\eps>0$ such that
\begin{equation}\label{unobis2s} |\langle \sigma^w \pi(z)
g,\pi(w)g\rangle|\lesssim m\left(\frac{w+z}2\right)\exp\big({-\eps|w-z|^{1/s}}\big),\qquad \forall\,
z,w\in\rdd.
\end{equation}
\end{theorem}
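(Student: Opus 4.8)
The plan is to reduce the equivalence \eqref{simbsmooth}$\Leftrightarrow$\eqref{unobis2s} to the already-established characterization of smooth functions via their STFT, namely Theorem \ref{teo1}, using the dictionary between the Weyl operator and the STFT of its symbol provided by Lemma \ref{lemma41}. The key observation is that \eqref{311} and \eqref{312} say precisely that the continuous Gabor matrix $|\la\sigma^w\pi(z)g,\pi(w)g\ra|$ is, up to the linear change of variables $(u,v)=\big(\tfrac{z+w}{2},j(w-z)\big)$, exactly the STFT $|V_\Phi\sigma(u,v)|$ taken with the window $\Phi=W(g,g)$. Since $j$ is an orthogonal map, $|w-z|=|v|$ and $\tfrac{w+z}{2}=u$, so the weight $m\big(\tfrac{w+z}{2}\big)$ becomes $m(u)$ and the exponential $\exp(-\epsilon|w-z|^{1/s})$ becomes $\exp(-\epsilon|v|^{1/s})$.

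First I would check that the window $\Phi=W(g,g)$ satisfies the hypotheses required by Theorem \ref{teo1} applied in dimension $2d$, with $s$ replaced by the same $s$ and the weight $v$ there replaced by our $v\otimes 1$ on $\rd^{2d}\times\rd^{2d}$ — more precisely the version of \eqref{finestra} reading $\|\partial^\a\Phi\|_{L^1_{v}(\rdd)}\lesssim C^{|\a|}(\a!)^s$. In the ultra-analytic range $1/2\le s<1$ we have $g\in S^s_s(\rd)$, hence by \eqref{wigner-gelfand} $\Phi=W(g,g)\in S^s_s(\rdd)$, and since here $s<1$ i.e. the "$r$"-index $s$ of $\Phi$ is $<1$, Proposition \ref{pro3.2} (with the remark following Proposition \ref{pro3.2}) guarantees $\Phi$ satisfies \eqref{finestra} for every submultiplicative $v$. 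In the range $s\ge1$ with $g\in\Sigma^1_1(\rd)$ we use \eqref{wigner-gelfand2} to get $\Phi\in\Sigma^1_1(\rdd)$, and then the $\Sigma^1_1$-part of Proposition \ref{pro3.2} (again with the closing remark) gives $\|\partial^\a\Phi\|_{L^1_v}\lesssim C^{|\a|}\a!\lesssim C^{|\a|}(\a!)^s$ for every submultiplicative $v$, in particular the given one. In the remaining case $s\ge1$, $g\in S^s_s(\rd)$ under the growth restriction $v(z)\lesssim\exp(\epsilon|z|^{1/s})$ for all $\epsilon>0$: here $\Phi\in S^s_s(\rdd)$ by \eqref{wigner-gelfand}, and by Proposition \ref{pro3.2} (first part, with $r=s$) $|\partial^\a\Phi(z)|\lesssim A^{|\a|}(\a!)^s\exp(-\delta|z|^{1/s})$ for some $\delta>0$; multiplying by $v(z)$ and using that $\epsilon<\delta$ can be chosen, the product is integrable and \eqref{finestra} follows.

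With this in hand the proof is short. Assume \eqref{simbsmooth}. Since $\Phi$ satisfies \eqref{finestra} in dimension $2d$, Theorem \ref{teo1} (implication $(i)\Rightarrow(iii)$, applied to $f=\sigma$ with the window $g$ there taken to be our $\Phi$) gives $\epsilon>0$ with $|V_\Phi\sigma(u,v)|\lesssim m(u)\exp(-\epsilon|v|^{1/s})$. Plugging in $(u,v)=\big(\tfrac{z+w}{2},j(w-z)\big)$ and using \eqref{311} together with $|j(w-z)|=|w-z|$ yields \eqref{unobis2s}. Conversely, assume \eqref{unobis2s}; via \eqref{312} this translates back to $|V_\Phi\sigma(u,v)|\lesssim m(u)\exp(-\epsilon|v|^{1/s})$ (the linear substitution $z=u-\tfrac12 j^{-1}(v)$, $w=u+\tfrac12 j^{-1}(v)$ gives $\tfrac{z+w}{2}=u$ and $|w-z|=|v|$), which is condition \eqref{STFTeps} of Theorem \ref{teo1} with $m$ of dimension $2d$; hence $(iii)\Rightarrow(i)$ there gives \eqref{simbsmooth}.

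The main obstacle — and the only point requiring real care rather than bookkeeping — is the verification that the Wigner window $\Phi=W(g,g)$ meets the $L^1_v$-type hypothesis \eqref{finestra} of Theorem \ref{teo1} in each of the three cases for $(s,g,v)$ listed in the statement; this is where the Gelfand-Shilov machinery \eqref{wigner-gelfand}, \eqref{wigner-gelfand2}, Proposition \ref{pro3.2} and the compatibility between the decay rate $1/s$ of $\Phi$ and the exponential growth rate of $v$ all have to be matched up. One should also note in passing that \eqref{unobis2s} forces $\sigma\in M^{\infty,1}_{m\otimes1}(\rdd)$ (as in the proof of Theorem \ref{teo1}), so that the inversion-formula step implicit in the $(iii)\Rightarrow(i)$ direction is legitimate, and that the condition $\sigma\in\cC^\infty(\rdd)$ is what allows the almost-everywhere identities to be upgraded to pointwise ones. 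Everything else is the orthogonal change of variables $j$ and relabelling of constants.
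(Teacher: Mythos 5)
Your proposal is correct and follows essentially the same route as the paper: verify via \eqref{wigner-gelfand}, \eqref{wigner-gelfand2} and Proposition \ref{pro3.2} that the Wigner window $\Phi=W(g,g)$ satisfies the hypothesis \eqref{finestra} of Theorem \ref{teo1} in dimension $2d$ (in each of the three cases for $s$, $g$, $v$), then use the equivalence \eqref{smoothf}$\Leftrightarrow$\eqref{STFTeps} together with the identities \eqref{311} and \eqref{312} of Lemma \ref{lemma41} and the orthogonality of $j$ to pass between $|V_\Phi\sigma(u,v)|\lesssim m(u)e^{-\eps|v|^{1/s}}$ and \eqref{unobis2s}. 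Your write-up is in fact somewhat more detailed than the paper's on the window verification, but the argument is the same.
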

\begin{proof}
$(i)\Rightarrow (ii)$. Proposition \ref{pro3.2} applied to the window $\Phi=W(g,g)$ in Lemma \ref{lemma41}, which lives in the space $S^s_s(\rdd)$ since $g\in S^s_s(\rd)$ by \eqref{wigner-gelfand}, and the assumptions on $v$ (recall also \eqref{weight}) imply that $\Phi$ satisfies the assumptions of Theorem \ref{teo1}. Hence, using the equivalence \eqref{smoothf} $\Leftrightarrow$ \eqref{STFTeps}, the  assumption \eqref{simbsmooth} is equivalent to the following decay estimate of the corresponding short-time Fourier transform
$$|V_\Phi \sigma(u,v)|\lesssim m(u) e^{-\eps |v|^{\frac1s}}, \quad u,v\in\rdd,
$$
for a suitable $\eps>0$, which combined with   \eqref{311} yields
$$|V_\Phi \sigma\big(\frac{z+w}2,j(w-z)\big)|\lesssim m\left(\frac{w+z}2\right) e^{-\eps |j(w-z)|^{\frac1s}}= m\left(\frac{w+z}2\right)e^{-\eps |w-z|^{\frac1s}}
$$
that is $(ii)$.\par
$(ii)\Rightarrow (i)$. Relation \eqref{312} and the decay assumption \eqref{unobis2s} give
\begin{align*}
|V_\Phi \sigma(u,v)|&=\left|\la\sigma^w \pi\left(u-\frac12 j^{-1}(v)\right)g,\pi\left(u+\frac12 j^{-1}(v)\right)g\ra\right|\\
&\lesssim m(u) e^{-\eps | j^{-1}(v)|^{\frac1s}}=m(u)e^{-\eps | v|^{\frac1s}}
\end{align*}
and using the equivalence  \eqref{smoothf} $\Leftrightarrow$ \eqref{STFTeps} we obtain the claim.
\end{proof}
\par
Of course, from \eqref{unobis2s} we deduce the discrete Gabor matrix decay in \eqref{unobis2discr} below. The viceversa requires more effort.
 Indeed, we appeal to a recent result obtained by   Gr{\"o}chenig and  Lyubarskii in \cite{GL09}.  They find sufficient conditions on the lattice $\Lambda=A \bZ^2$, $A\in GL(2,\R)$,  such that $g=\sum_{k=0}^n c_k H_k$, with $H_k$ Hermite function, forms
a  so-called Gabor (super)frame $\G(g,\Lambda)$. Besides they prove the existence of dual windows $\gamma$ that belong to the space $S^{1/2} _{1/2} (\R)$ (cf.\  \cite[Lemma 4.4]{GL09}).
This theory transfers to the $d$-dimensional case  by taking a tensor product $g=g_1\otimes\cdots\otimes g_d\in S^{1/2} _{1/2} (\rd)$ of windows as above, which defines a Gabor frame on the lattice $\Lambda_1\times\cdots\times\Lambda_d$ and   possesses a dual window $\gamma=\gamma_1\otimes\cdots\otimes \gamma_d$ in the same space $\in S^{1/2} _{1/2} (\rd)$. Let us simply call \emph{Gabor super-frame} $\G(g,\Lambda)$ for $\lrd$ a Gabor frame with the above properties.
The Gabor super-frames are the key for the discretization of the kernel in \eqref{unobis2s}. First, we need  the preliminary result below, which reflects  the algebra property of Gelfand-Shilov spaces.
For $s\geq 1/2$, $\eps>0$, we  define the following weight functions:
\begin{equation}\label{pesieps} w_{s,\eps}(z):=\exp\big({-\eps|z|^{1/s}}\big),\quad z \in\rdd.
\end{equation}

\begin{lemma} \label{algpesi} Let $\Lambda\subset \rdd$  be a lattice of $\rdd$.
Then the sampling $\{w_{s,\eps}(\lambda)\}_{\lambda\in\Lambda}$ of \eqref{pesieps} (defined on $\rdd$) satisfies
\begin{equation}\label{convpesi} (w_{s,\eps}\ast w_{s,\eps})(\lambda):=\sum_{\nu\in\Lambda}w_{s,\eps}(\lambda-\nu)w_{s,\eps}(\nu)\lesssim
w_{s,\,\eps 2^{-1/s}}(\lambda).
\end{equation}
\end{lemma}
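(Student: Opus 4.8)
The plan is to prove the sub-convolutivity estimate \eqref{convpesi} by a standard, if slightly delicate, splitting of the lattice sum according to which of the two points $\lambda-\nu$, $\nu$ carries the larger norm. Fix $\lambda\in\Lambda$ and split $\Lambda=\Lambda_1\cup\Lambda_2$, where $\Lambda_1=\{\nu\in\Lambda\,:\,|\nu|\geq |\lambda|/2\}$ and $\Lambda_2=\{\nu\in\Lambda\,:\,|\nu|<|\lambda|/2\}$. On $\Lambda_2$ we have $|\lambda-\nu|\geq|\lambda|-|\nu|\geq|\lambda|/2$, so in either region one of the two factors $w_{s,\eps}(\lambda-\nu)$, $w_{s,\eps}(\nu)$ is bounded by $\exp\big(-\eps(|\lambda|/2)^{1/s}\big)=w_{s,\,\eps 2^{-1/s}}(\lambda)$, since $(|\lambda|/2)^{1/s}=2^{-1/s}|\lambda|^{1/s}$.

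Next I would estimate the remaining factor in each region by the full lattice sum: on $\Lambda_1$,
\[
\sum_{\nu\in\Lambda_1}w_{s,\eps}(\lambda-\nu)w_{s,\eps}(\nu)\leq w_{s,\,\eps 2^{-1/s}}(\lambda)\sum_{\nu\in\Lambda}w_{s,\eps}(\lambda-\nu)=w_{s,\,\eps 2^{-1/s}}(\lambda)\sum_{\mu\in\Lambda}w_{s,\eps}(\mu),
\]
and symmetrically on $\Lambda_2$ one bounds $w_{s,\eps}(\lambda-\nu)\leq w_{s,\,\eps 2^{-1/s}}(\lambda)$ and sums $w_{s,\eps}(\nu)$ over $\nu\in\Lambda$. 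Adding the two contributions gives
\[
(w_{s,\eps}\ast w_{s,\eps})(\lambda)\leq 2\Big(\sum_{\mu\in\Lambda}w_{s,\eps}(\mu)\Big)\,w_{s,\,\eps 2^{-1/s}}(\lambda),
\]
so the claim follows once we know that $\sum_{\mu\in\Lambda}w_{s,\eps}(\mu)=\sum_{\mu\in\Lambda}\exp\big(-\eps|\mu|^{1/s}\big)<\infty$. This convergence is immediate: since $\Lambda$ is a lattice in $\rdd$, the number of lattice points in the annulus $R\leq|\mu|<R+1$ grows at most polynomially in $R$, while $\exp(-\eps R^{1/s})$ decays faster than any inverse polynomial (here $1/s>0$, using $s\geq1/2$), so the series converges by comparison with an absolutely convergent sum; this constant is what gets absorbed into the symbol $\lesssim$.

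I do not expect a genuine obstacle here — the argument is the classical "peak estimate'' for convolution of radial decreasing weights, and the only point requiring a little care is bookkeeping the constant in the exponent: one must use the elementary inequality $|\lambda|^{1/s}\leq 2^{1/s}\max\{|\lambda-\nu|,|\nu|\}^{1/s}$ (equivalently $|\lambda|\leq 2\max\{|\lambda-\nu|,|\nu|\}$ by the triangle inequality, raised to the power $1/s$), which is exactly what produces the degraded constant $\eps 2^{-1/s}$ in \eqref{convpesi}. No loss of decay type occurs, only a constant deterioration, which is consistent with the algebra property of Gelfand--Shilov spaces alluded to before the statement. Hence the proof reduces to the two displayed estimates above together with the trivial summability of $\{w_{s,\eps}(\mu)\}_{\mu\in\Lambda}$.
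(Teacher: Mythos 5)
Your proof is correct and follows essentially the same route as the paper: the paper splits the sum according to whether $|\lambda-\nu|\leq|\lambda|/2$ (so that $|\nu|\geq|\lambda|/2$), which is the mirror image of your split on $|\nu|\gtrless|\lambda|/2$, and in both arguments one factor is bounded by $e^{-\eps 2^{-1/s}|\lambda|^{1/s}}$ while the other is summed over the lattice. Your only addition is to spell out the summability of $\sum_{\mu\in\Lambda}e^{-\eps|\mu|^{1/s}}$, which the paper leaves implicit in the symbol $\lesssim$.
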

\begin{proof} We use the arguments of \cite[Lemma 11.1.1(c)]{grochenig}. For $\lambda \in \Lambda$, we divide the lattice $\Lambda$ into the subsets $N_\lambda=\{\nu\in\Lambda\,: |\lambda-\nu|\leq |\lambda|/2\}$ and
$N^c_\lambda=\{\nu\in\Lambda\,: |\lambda-\nu|>|\lambda|/2\}$. For $\nu\in N_\lambda$, $|\nu|\geq |\lambda|/2$ and $|\nu|^{1/s}\geq (|\lambda|/2)^{1/s}$, so
\begin{equation*}(w_{s,\eps}\ast w_{s,\eps})(\lambda)\leq e^{-(\eps 2^{-\frac1s})|\lambda|^{\frac1s}}\left(\sum_{\nu\in N_\lambda} e^{-\eps|\lambda-\nu|^{\frac1s}}+\sum_{\nu\in N^c_\lambda} e^{-\eps|\nu|^{\frac1s}}\right)
\lesssim e^{-(\eps 2^{-\frac1s})|\lambda|^{\frac1s}}.
\end{equation*}
This concludes the proof.
\end{proof}
\begin{theorem}\label{equivdiscr-cont}  Let $\G(g,\Lambda)$ a Gabor super-frame for $\lrd$. Consider $m\in\mathcal{M}_v(\rdd)$, $s\geq1/2$,
 and a symbol  $\sigma\in\cC^\infty(\rdd)$. Then the following properties are equivalent:
\par
{(i)} There exists $\eps>0$ such that the estimate \eqref{unobis2s} holds.\par
{(ii)} There exists $\eps>0$ such that
\begin{equation}\label{unobis2discr} |\langle \sigma^w \pi(\mu)
g,\pi(\lambda)g\rangle|\lesssim m\left(\frac{\lambda+\mu}2\right) \exp\big({-\eps|\lambda-\mu|^{1/s}}\big),\qquad \forall\,
\lambda,\mu\in\Lambda.
\end{equation}
\end{theorem}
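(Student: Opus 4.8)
The plan is to prove the nontrivial implication $(ii)\Rightarrow(i)$, since $(i)\Rightarrow(ii)$ is immediate by restricting the continuous estimate \eqref{unobis2s} to $z=\mu$, $w=\lambda$ in the lattice $\Lambda$. The strategy to pass from the discrete estimate to the continuous one is to use the Gabor super-frame $\G(g,\Lambda)$, together with its dual window $\gamma\in S^{1/2}_{1/2}(\rd)$, to expand the time-frequency shift $\pi(z)g$ for arbitrary $z\in\rdd$ in terms of the frame, and then to re-sum. Concretely, for any $f\in L^2(\rd)$ one has $f=\sum_{\nu\in\Lambda}\langle f,\pi(\nu)\gamma\rangle\pi(\nu)g$; applying this to $f=\pi(z)g$ and to the adjoint expansion for $\pi(w)g$, one obtains
\[
\la \sigma^w\pi(z)g,\pi(w)g\ra=\sum_{\nu,\nu'\in\Lambda}\langle\pi(z)g,\pi(\nu)\gamma\rangle\,\overline{\langle\pi(w)g,\pi(\nu')\gamma\rangle}\,\la\sigma^w\pi(\nu)g,\pi(\nu')\gamma\ra .
\]
(One should be slightly careful and use a mixed Gabor matrix $\la\sigma^w\pi(\nu)g,\pi(\nu')\gamma\ra$, or alternatively expand only one of the two entries; either way the estimate \eqref{unobis2discr} — or its trivial analogue with $g$ replaced by $\gamma$, which follows from the same arguments applied to the window $\gamma\in S^{1/2}_{1/2}\subset S^s_s$ — controls this matrix.)

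The key ingredients for the re-summation are, first, the super-exponential decay of the cross-coefficients $|\langle\pi(z)g,\pi(\nu)\gamma\rangle|=|V_\gamma g(\nu-z)\cdot(\text{unimodular})|\lesssim \exp(-\eps_0|z-\nu|^{1/s})$, which holds because $g,\gamma\in S^{1/2}_{1/2}(\rd)\subset S^s_s(\rd)$ for $s\geq 1/2$ and hence $V_\gamma g\in S^s_s(\rdd)$ decays like $\exp(-\eps_0|\cdot|^{1/s})$ by \eqref{zimmermann1}--\eqref{zimmermann2}; and second, Lemma \ref{algpesi}, which says precisely that a double convolution over the lattice of the weights $w_{s,\eps}$ reproduces a weight of the same type (with a possibly smaller constant). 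Thus, after inserting all three factors into the double sum and using the $v$-moderateness of $m$ to split $m((\nu+\nu')/2)\lesssim v((\nu+\nu')/2-(z+w)/2)\,m((z+w)/2)$ — with the slack absorbed by the exponential factors, since $v$ grows at most exponentially by \eqref{weight} while the decay is super-exponential — one is left with a triple convolution of weights of the form $w_{s,\cdot}$, evaluated at $w-z$. Applying Lemma \ref{algpesi} twice collapses this to $\lesssim m((z+w)/2)\exp(-\eps|w-z|^{1/s})$, which is exactly \eqref{unobis2s}.

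The main obstacle I expect is the bookkeeping of the geometry: one must verify that the ``centers'' of the three exponential factors, namely $z-\nu$, $w-\nu'$, and $(\nu+\nu')/2-(\text{relevant point})$, combine by the triangle inequality so that Lemma \ref{algpesi} actually applies — i.e.\ one needs an inequality of the type $|z-w|^{1/s}\lesssim |z-\nu|^{1/s}+|\nu-\nu'|^{1/s}+|\nu'-w|^{1/s}$ (valid up to a constant in the exponent because $t\mapsto t^{1/s}$ is subadditive for $s\geq 1$ and quasi-subadditive for $1/2\le s<1$), and one has to track how the exponential constant $\eps$ shrinks at each application. A second, more technical point is the convergence and the exchange of summations in the double (or triple) series, which is justified a posteriori by the absolute convergence furnished by these same super-exponential bounds. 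Everything else is a routine, if slightly tedious, manipulation of sums over the lattice $\Lambda$, entirely parallel to the standard Wiener-amalgam/Sjöstrand-class arguments (cf.\ \cite{GR, grochenig}) but with the polynomial weights replaced by the weights $w_{s,\eps}$ and with Lemma \ref{algpesi} playing the role of the usual convolution relation for polynomially moderate weights.
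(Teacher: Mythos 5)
Your overall strategy---expand the continuous time--frequency shifts in the Gabor super-frame with dual window $\gamma\in S^{1/2}_{1/2}(\rd)$ and re-sum via Lemma \ref{algpesi}---is the same as the paper's, but two steps as written do not go through. First, the expansion you display is misstated and your proposed repairs do not close it: if you expand \emph{both} entries, $\pi(z)g=\sum_\nu\la\pi(z)g,\pi(\nu)\gamma\ra\,\pi(\nu)g$ and $\pi(w)g=\sum_{\nu'}\la\pi(w)g,\pi(\nu')\gamma\ra\,\pi(\nu')g$, the matrix inside the double sum is the pure $g$-matrix $\la\sigma^w\pi(\nu)g,\pi(\nu')g\ra$, which is exactly what hypothesis (ii) controls; the mixed matrix $\la\sigma^w\pi(\nu)g,\pi(\nu')\gamma\ra$ is \emph{not} controlled by (ii), and its ``analogue with $g$ replaced by $\gamma$'' is not available from (ii) alone---it would follow from the symbol characterization or from (i), i.e.\ from what you are trying to prove, so that fallback is circular; expanding only one entry leaves a continuous argument in the other slot, which (ii) again does not reach. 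Note also that Lemma \ref{algpesi} is a statement about lattice points, while your sums involve $\nu-z$ with $z$ arbitrary, so you need (and should state) a semi-discrete variant; the paper avoids this by writing $w=\lambda+u$, $z=\mu+u'$ with $u,u'$ in a compact fundamental domain $\mathcal{Q}$ and expanding only $\pi(u)g$, $\pi(u')g$, so that the inner entries are the lattice samples $\la\sigma^w\pi(\mu+\nu')g,\pi(\lambda+\nu)g\ra$ and the lemma applies literally, the compact shifts $u,u'$ being handled at the very end.

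Second, and more seriously, your absorption of the weight fails for $s\geq1$. You discard the coefficient decay down to $|\la\pi(z)g,\pi(\nu)\gamma\ra|\lesssim e^{-\eps_0|\nu-z|^{1/s}}$ and then claim that the factors produced by $v$-moderateness, $m\big(\tfrac{\nu+\nu'}{2}\big)\lesssim m\big(\tfrac{z+w}{2}\big)v(\cdot)v(\cdot)$, are absorbed ``since $v$ grows at most exponentially while the decay is super-exponential''. That is true only for $1/2\leq s<1$; for $s\geq1$ the decay $e^{-\eps_0|\cdot|^{1/s}}$ is at best exponential (sub-exponential for $s>1$), while $v$ may grow like $e^{k|\cdot|}$ with arbitrary $k$, and Theorem \ref{equivdiscr-cont} imposes no extra growth restriction on $v$. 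The correct order of operations---and what the paper does---is to keep the Gaussian bound $|V_\gamma g(\cdot)|\lesssim e^{-h|\cdot|^{2}}$, available because $g,\gamma\in S^{1/2}_{1/2}$, absorb $v$ into the Gaussian first, and only afterwards relax $e^{-\tilde h|\cdot|^{2}}$ to $e^{-b|\cdot|^{1/s}}$ so that Lemma \ref{algpesi} can be iterated. With these two corrections your argument coincides with the paper's proof.
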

\begin{proof} It remains to show that $(ii)\Rightarrow (i)$. The pattern of \cite[Theorem 3.2]{charly06} can be adapted to this proof by using  a Gabor super-frame $\G(g,\Lambda)$, with a dual window $\gamma\in S^{1/2} _{1/2} (\rd)$.
\par
 Let $\mathcal{Q}$ be a symmetric relatively compact fundamental domain of the lattice $\Lambda\subset\rdd$. Given $w,z\in\rdd$, we can write them uniquely as $w=\lambda+u$, $z=\mu+u'$, for $\lambda,\mu\in\Lambda$ and $u,u'\in \mathcal{Q}$. Using the Gabor reproducing formula for the time-frequency shift $\pi(u)g\in S^{1/2} _{1/2} (\rd)$ we can write
\begin{equation*} \pi(u)g=\sum_{\nu\in\Lambda}\la\pi(u)g,\pi(\nu)\gamma\ra\pi(\nu) g.
\end{equation*}
Inserting the prior expansions in the assumption \eqref{unobis2discr},
\begin{align}
&|\langle \sigma^w \pi(\mu+u')
g,\pi(\lambda+u)g\rangle|\notag\\
&\qquad\qquad\leq \sum_{\nu,\nu'\in\Lambda} |\langle \sigma^w \pi(\mu+\nu')g,\pi(\lambda+\nu)g\rangle|\,|\la \pi(u')g,\pi(\nu')\gamma\ra|\,|\la\pi(u)g,\pi(\nu)\gamma\ra|\notag\\
&\qquad\qquad\lesssim \sum_{\nu,\nu'\in\Lambda} m\left(\frac{\lambda+\mu+\nu+\nu'}2\right) e^{-\eps|\lambda+\nu-\mu-\nu'|^{\frac1s}}|V_\gamma g(\nu'-u')| |V_\gamma g(\nu-u)|\label{mag1}.
\end{align}
Since the window functions $g,\gamma$ are both in $ S^{1/2} _{1/2} (\rd)$,  the  STFT $V_\gamma g$ is in $S^{1/2} _{1/2} (\rdd)$ in view of \eqref{zimmermann1}. Thus there exists $h>0$ such that $ |V_\gamma g(z)|\lesssim e^{-h |z|^{2}}$, for every $z\in\rdd$.
In particular, being $\mathcal{Q}$  relatively compact and $u\in \mathcal{Q}$, $$ |V_\gamma g(\nu-u)|\lesssim e^{-h |\nu-u|^{2}}\leq \sup_{u\in Q}e^{-h |\nu-u|^{2}}\lesssim e^{-h |\nu|^{2}}.$$ The assumption $m\in\mathcal{M}_v(\rdd)$ yields
$$m\left(\frac{\lambda+\mu+\nu+\nu'}2\right)\lesssim m\left(\frac{\lambda+\mu}2\right)v\left(\frac{\nu}2\right)v\left(\frac{\nu'}2\right)
$$
and, for every $0<\tilde{h}<h$,
$$v\left(\frac{\nu}2\right)e^{-h |\nu|^{2}} \lesssim e^{-\tilde{h} |\nu|^{2}},\quad \forall\nu\in\Lambda.
$$
Inserting these estimates in \eqref{mag1},
\begin{align}&|\langle \sigma^w \pi(\mu+u')
g,\pi(\lambda+u)g\rangle|\notag\\
&\qquad\qquad\lesssim m\left(\frac{\lambda+\mu}2\right)\sum_{\nu,\nu'\in\Lambda} e^{-\eps|\lambda+\nu-\mu-\nu'|^{\frac1s}}v\left(\frac{\nu}2\right)e^{-h |\nu|^{2}}v\left(\frac{\nu'}2\right)e^{-h |\nu'|^{2}}\notag\\
&\qquad\qquad\lesssim m\left(\frac{\lambda+\mu}2\right)\sum_{\nu,\nu'\in\Lambda} e^{-\eps|\lambda+\nu-\mu-\nu'|^{\frac1s}}e^{-\tilde{h} |\nu|^{2}}e^{-\tilde{h} |\nu'|^{2}}\notag\\
&\qquad\qquad\leq m\left(\frac{\lambda+\mu}2\right)\sum_{\nu,\nu'\in\Lambda} e^{-b (|\lambda+\nu-\mu-\nu'|^{\frac1s}+ |\nu|^{\frac1s}+|\nu'|^{\frac1s})}\label{convpe}
\end{align}
with $b=\eps$ for $s>1/2$ whereas $b=\min\{\eps,\tilde{h}\}$ for $s=1/2$ (there may be a loss of decay).
We observe that the row \eqref{convpe} can be rewritten as
$$m\left(\frac{\lambda+\mu}2\right)(w_{s,b}\ast w_{s,b}\ast w_{s,b})(\lambda-\mu), $$
where $w_{s,b}(\lambda)=e^{-b|\lambda|^{1/s}}$.  Now we apply Lemma \ref{algpesi} twice and we obtain
\begin{equation}\label{bo}
\eqref{convpe}\lesssim m\left(\frac{\lambda+\mu}2\right) e^{-\tilde{\eps}|\lambda-\mu|^{\frac1s}}
\end{equation}
with $\tilde{\eps}=b 2^{-2/s}$. \par
If $w,z\in\rdd$ and $w=\lambda+u$, $z=\mu+u'$, $\lambda,\mu\in\Lambda$, $u,u'\in\mathcal{Q}$, then $\lambda-\mu=w-z+u'-u$ and $u'-u\in \mathcal{Q}-\mathcal{Q}$, which is a relatively compact set, thus
\begin{equation}\label{stimaexp}e^{-\tilde{\eps}|\lambda-\mu|^{\frac1s}}\lesssim \sup_{u\in \mathcal{Q}-\mathcal{Q}}e^{-\tilde{\eps}|w-z+u|^{\frac1s}}\lesssim e^{-\tilde{\eps}|w-z|^{\frac1s}}.
\end{equation}
Finally, the $v$-moderateness of the weight $m\in\mathcal{M}_v(\rdd)$, together with the fact that $v$ is continuous and the set $\mathcal{Q}+\mathcal{Q}$ is  relatively compact, let us write
\begin{align}\label{stimapesi} m\left(\frac{\lambda+\nu}{2}\right)&=m\left(\frac{w+z}{2}-\frac{u+u'}{2}\right)\lesssim m\left(\frac{w+z}{2}\right)v\left(-\frac{u+u'}{2}\right)\\
&\lesssim m\left(\frac{w+z}2\right)\sup_{u\in \mathcal{Q}+\mathcal{Q}}v\left(\frac{u}2\right)\lesssim m\left(\frac{w+z}2\right)\notag
\end{align}
Combining the estimates \eqref{stimaexp} and \eqref{stimapesi} with \eqref{bo} we obtain \eqref{unobis2s}, with the parameter $\eps=\tilde{\eps}$ which appears in \eqref{bo}.
\end{proof}
\subsection{Sparsity of the Gabor matrix}\label{sec5}
 The operators $\sigma^w$ which satisfy Theorem \ref{CGelfandPseudo}, say with $m=v=1$, enjoy a fundamental sparsity property. Indeed, let $\G(g,\Lambda)$ be a Gabor frame for $\lrd$, with $g\in S^s_s(\rd)$, $s\geq 1/2$.  Then, as we saw,
 \begin{equation}\label{unobis2discrbis} |\langle \sigma^w \pi(\mu)
g,\pi(\lambda)g\rangle|\leq C \exp\big({-\eps|\lambda-\mu|^{1/s}}\big),\qquad \forall\,
\lambda,\mu\in\Lambda,
\end{equation}
with suitable constants $C>0$, $\eps>0$. This gives at once an exponential-type sparsity,
in the sense precised by the
following proposition (cf.\ 
\cite{candes, guo-labate} for the more standard notion of super-polynomial sparsity).
\begin{proposition}
Let the Gabor matrix
$\langle \sigma^w \pi(\mu)
g,\pi(\lambda)g\rangle$ satisfy \eqref{unobis2discrbis}. Then it is sparse in the following sense.
Let $a$ be any column
or row of the matrix, and let
$|a|_n$ be the $n$-largest
entry of the sequence $a$.
Then, $|a|_n$
satisfies
\[
|a|_n\leq C \displaystyle \exp\big({-\epsilon n^{1/(2ds)}}\big),\quad n\in\bN
\]
\end{proposition}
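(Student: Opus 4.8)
The plan is to exploit the super-exponential off-diagonal decay \eqref{unobis2discrbis} to bound the size of the rearranged entries of a single row or column, using a simple counting argument on the lattice $\Lambda\subset\rdd$. Fix a row or column $a$, say indexed by $\lambda\mapsto\langle\sigma^w\pi(\mu)g,\pi(\lambda)g\rangle$ for a fixed $\mu$ (the column case is identical by symmetry of the estimate in $\lambda,\mu$). By \eqref{unobis2discrbis} we have $|a_\lambda|\leq C\exp(-\eps|\lambda-\mu|^{1/s})$. The key observation is that, since $\Lambda$ is a lattice in $\rdd\cong\R^{2d}$, the number of lattice points in a ball of radius $R$ centered at $\mu$ grows like $R^{2d}$; more precisely there is a constant $c>0$ with $\#\{\lambda\in\Lambda:|\lambda-\mu|\leq R\}\leq c(1+R)^{2d}$ for all $R>0$.

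First I would note that to estimate $|a|_n$, the $n$-th largest entry, it suffices to exhibit a radius $R=R(n)$ such that the number of $\lambda$ with $|\lambda-\mu|\leq R$ is at most $n$; then at least one of the $n$ largest entries corresponds to some $\lambda$ with $|\lambda-\mu|>R$, hence $|a|_n\leq C\exp(-\eps R^{1/s})$. From the lattice counting bound, the condition $c(1+R)^{2d}\leq n$ is met by choosing $R$ comparable to $n^{1/(2d)}$, i.e.\ $R = c' n^{1/(2d)}$ for a suitable $c'>0$ and $n$ large (small $n$ being absorbed into the constant $C$). Substituting this $R$ into $C\exp(-\eps R^{1/s})$ yields
\[
|a|_n\leq C\exp\big(-\eps (c')^{1/s} n^{1/(2ds)}\big)=C\exp\big(-\epsilon\, n^{1/(2ds)}\big),
\]
after renaming the constant in the exponent; this is exactly the claimed bound.

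I would carry out the steps in this order: (1) record the lattice volume-counting estimate $\#\{\lambda:|\lambda-\mu|\leq R\}\lesssim (1+R)^{2d}$; (2) reformulate ``$|a|_n$ is the $n$-th largest entry'' as ``among any $n$ indices, one lies outside the ball of radius $R(n)$'', and choose $R(n)\asymp n^{1/(2d)}$ so that the ball contains fewer than $n$ lattice points; (3) plug $R(n)$ into the decay estimate \eqref{unobis2discrbis} and simplify the exponent to $n^{1/(2ds)}$, adjusting constants. The argument is essentially a discrete version of the standard fact that a rapidly decaying sequence on a $2d$-dimensional lattice has rapidly decaying decreasing rearrangement, with the rate degraded by the exponent $1/(2d)$ coming from the dimension.

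The only mildly delicate point — and the one I would be most careful about — is the bookkeeping in step (2): making precise that if fewer than $n$ lattice points lie in $\{|\lambda-\mu|\leq R\}$ then the $n$-th largest value of $|a_\lambda|$ is attained (or majorized) at some point with $|\lambda-\mu|>R$, and that this forces $|a|_n\le C\exp(-\eps R^{1/s})$. This is genuinely elementary but must be phrased correctly, together with the handling of small $n$ (where one simply uses $|a|_n\le|a|_1\le C$ and enlarges the implied constant). No real analytic obstacle arises; the content is entirely in the dimensional counting.
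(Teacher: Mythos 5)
Your argument is correct, but it follows a genuinely different route from the paper. You bound the rearrangement directly by a lattice-counting argument: since $\#\{\lambda\in\Lambda : |\lambda-\mu|\le R\}\lesssim (1+R)^{2d}$ uniformly in $\mu$, fewer than $n$ entries can exceed the threshold $C\exp(-\eps R^{1/s})$ once $R\asymp n^{1/(2d)}$, whence $|a|_n\le C\exp(-\eps' n^{1/(2ds)})$; the delicate point you flag (that fewer than $n$ indices inside the ball forces the $n$-th largest value to be majorized by the decay at radius $R$, plus absorbing small $n$ into the constant) is exactly the right one and is handled correctly. The paper instead reduces the claim, via a discrete analog of Proposition \ref{equi}, to the moment bounds $n^{\alpha}|a|_n\le C^{\alpha+1}(\alpha!)^{2ds}$, which it proves from the elementary inequality $n^{1/p}|a|_n\le\|a\|_{\ell^p}$ with $p=1/\alpha$, estimating $\sum_{\lambda\in\Lambda}e^{-\eps p|\lambda-\mu|^{1/s}}$ by comparison with a radial integral (a Gamma-function computation) and concluding with Stirling's formula. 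Your approach is more elementary and self-contained — it does not invoke any discrete version of Proposition \ref{equi}, and it makes the dependence of the constants on $\eps$, $s$, $d$ and the lattice quite transparent; the paper's approach has the merit of running parallel to the standard way super-polynomial sparsity is quantified in the curvelet/shearlet literature (through $\ell^p$ quasi-norm control of rearrangements) and of reusing machinery already established earlier in the paper. Both yield the stated bound.
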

for some constants $C>0,\epsilon>0$.
\begin{proof}
By a discrete analog of Proposition \ref{equi} it suffices to prove that
\[
n^\alpha |a|_n\leq C^{\alpha+1}(\alpha!)^{2ds},\quad \alpha\in\mathbb{N}.
\]
On the other hand we have
\[
n^{\frac1p}\cdot|a|_n\leq
\|a\|_{\ell^p},
\]
for every $0<p\leq\infty$. Hence by \eqref{unobis2discrbis} and setting $p=1/\alpha$ we obtain
\[
n^\alpha |a|_n\leq \Big(\sum_{\lambda\in\Lambda} e^{-\epsilon p|\lambda-\mu|^{\frac1s}}\Big)^{\frac1p}
=
 \Big(\sum_{\lambda\in\Lambda} e^{-\epsilon p|\lambda|^{\frac1s}}\Big)^{\frac1p}.
\]
Let $\mathcal{Q}$ be a fundamental domain of the lattice $\Lambda$. Then if $x\in\lambda+\mathcal{Q}$, $\lambda\in\Lambda$, we have $|x|\leq |\lambda|+C_0$, therefore $|x|^{1/s}\leq C_1(|\lambda|^{1/s}+1)$. Hence
\begin{align*}
\sum_{\lambda\in\Lambda} e^{-\epsilon p|\lambda|^{\frac1s}}&\leq C_2 \int_{\rdd} e^{-\epsilon p|x|^{\frac1s}}\, dx=\int_{\mathbb{S}^{2d-1}}d\sigma\int_0^{+\infty} e^{-\eps p \rho^{\frac 1s}}\rho^{2d-1}d \rho\\
&= \frac {C_3s}{(\eps p)^{2ds}}\int_0^{+\infty} e^{-t} t^{2d s-1} dt= \frac{C_3s \Gamma(2ds)}{(\eps p)^{2ds}} =\frac{C_4}{p^{2ds}}
\end{align*}
Finally, by Stirling's formula,
\[
n^\alpha |a|_n\leq \frac{C_4^{1/p} }{p^{\frac{2ds}{p}}}\leq C_5^{\alpha+1}(\alpha!)^{2ds}.
\]
\end{proof}
\subsection{Boundedness of pseudodifferential operators on modulation and Gelfand-Shilov spaces}
We list here some continuity results on modulation and Gelfand-Shilov spaces which follow easily from our basic Theorem \ref{CGelfandPseudo}.
\begin{proposition}\label{modsp}
Let $s\geq 1/2$ and consider a symbol $\sigma\in \cC^\infty(\rdd)$ satisfying the estimates
\begin{equation}\label{stimas}
|\partial^\a \sigma(z)|\lesssim C^{|\a|}(\a!)^s,
\end{equation}
for some $C>0$.
Let $m\in \mathcal{M}_v(\rdd)$ and, if $s\geq1$, assume the weight $v$ satisfies
 \[
 v(z)\lesssim \exp\big({\epsilon |z|^{1/s}}\big),\quad z\in\rdd
 \]
for every $\epsilon>0$.
 Then the Weyl operator
$\sigma^w$ extends to a bounded operator on $\mathcal{M}_m^{p,q}(\R^d)$.
\end{proposition}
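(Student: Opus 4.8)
The plan is to derive from the symbol estimate a super-exponential off-diagonal bound for the discrete Gabor matrix of $\sigma^w$, to show that any such matrix acts boundedly on the weighted mixed-norm sequence spaces $\ell^{p,q}_m(\Lambda)$, and finally to transport this to $\Mmpq(\rd)$ through the Gabor frame characterization of modulation spaces. First I would fix a window $g\in S^{1/2} _{1/2}(\rd)$ generating a Gabor super-frame $\G(g,\Lambda)$ with dual window $\gamma\in S^{1/2} _{1/2}(\rd)$, as in Section \ref{section4}. Since $S^{1/2} _{1/2}(\rd)\subset S^s_s(\rd)$ and $S^{1/2} _{1/2}(\rd)\subset\Sigma^1_1(\rd)$ for every $s\geq1/2$, this $g$ fulfils the window hypotheses of Theorem \ref{CGelfandPseudo} for all $s\geq1/2$, with the symbol weight there taken to be $\equiv1$ (so that \eqref{simbsmooth} is exactly \eqref{stimas}, the assumption $|\partial^\a\sigma(z)|\lesssim C^{|\a|}(\a!)^s$ forcing in particular $\sigma\in L^\infty(\rdd)$). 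Applying Theorem \ref{CGelfandPseudo} and then the discretization Theorem \ref{equivdiscr-cont}, one obtains a constant $\epsilon>0$ such that
\begin{equation}\label{planmodsp1}
|\langle \sigma^w\pi(\mu)g,\pi(\lambda)g\rangle|\lesssim w_{s,\epsilon}(\lambda-\mu)=\exp\big({-\epsilon|\lambda-\mu|^{1/s}}\big),\qquad \lambda,\mu\in\Lambda.
\end{equation}

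Next I would check that the matrix $B=\big(\langle\sigma^w\pi(\mu)g,\pi(\lambda)g\rangle\big)_{\lambda,\mu\in\Lambda}$ is bounded on $\ell^{p,q}_m(\Lambda)$ for every $m\in\mathcal{M}_v(\rdd)$ and $1\leq p,q\leq\infty$. Setting $H:=w_{s,\epsilon}$, the key point is that $H\,v\in\ell^1(\Lambda)$: if $1/2\leq s<1$ then $1/s>1$ and this follows from the exponential bound \eqref{weight} on $v$; if $s\geq1$ then $1/s\leq1$ and it follows instead from the standing assumption $v(z)\lesssim\exp(\epsilon'|z|^{1/s})$, valid for every $\epsilon'>0$, upon choosing $\epsilon'<\epsilon$. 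Then, by the $v$-moderateness of $m$,
\begin{equation}\label{planmodsp2}
|(Bc)_\lambda|\,m(\lambda)\lesssim\sum_{\mu\in\Lambda}H(\lambda-\mu)\,v(\lambda-\mu)\,|c_\mu|\,m(\mu)=\big((Hv)\ast(|c|\,m)\big)(\lambda),
\end{equation}
and since convolution with an $\ell^1$ kernel is bounded on $\ell^{p,q}(\Lambda)$ (Minkowski's inequality together with translation invariance), we get $\|Bc\|_{\ell^{p,q}_m(\Lambda)}\lesssim\|Hv\|_{\ell^1}\,\|c\|_{\ell^{p,q}_m(\Lambda)}$.

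Finally I would transfer this to $\Mmpq(\rd)$. Since $g,\gamma\in S^{1/2} _{1/2}(\rd)\subset\Sigma^1_1(\rd)\subset M^1_v(\rd)$ and $\G(g,\Lambda)$, $\G(\gamma,\Lambda)$ are both Gabor frames for $\lrd$, one has the norm equivalences $\|h\|_{\Mmpq}\asymp\|(\langle h,\pi(\lambda)g\rangle)_\lambda\|_{\ell^{p,q}_m(\Lambda)}\asymp\|(\langle h,\pi(\lambda)\gamma\rangle)_\lambda\|_{\ell^{p,q}_m(\Lambda)}$, with bounded analysis and synthesis operators. For $f$ in the dense subspace $S^{1/2} _{1/2}(\rd)$ of $\Mmpq(\rd)$ (for $p,q<\infty$; otherwise one argues by weak-$*$ density) I would expand $f=\sum_{\mu\in\Lambda}\langle f,\pi(\mu)\gamma\rangle\pi(\mu)g$ and apply $\sigma^w$ term by term — legitimate by the continuity of $\sigma^w$ on the relevant Gelfand--Shilov spaces and the convergence of the Gabor expansion therein — so that, pairing with $\pi(\lambda)g$,
\[
\langle\sigma^w f,\pi(\lambda)g\rangle=\sum_{\mu\in\Lambda}\langle f,\pi(\mu)\gamma\rangle\,\langle\sigma^w\pi(\mu)g,\pi(\lambda)g\rangle=(Bc)_\lambda,\qquad c_\mu:=\langle f,\pi(\mu)\gamma\rangle .
\]
Combining the two norm equivalences with the boundedness of $B$ gives $\|\sigma^w f\|_{\Mmpq}\asymp\|(Bc)_\lambda\|_{\ell^{p,q}_m(\Lambda)}\lesssim\|c\|_{\ell^{p,q}_m(\Lambda)}\asymp\|f\|_{\Mmpq}$, and a density argument yields the bounded extension.

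The only place where a hypothesis is genuinely needed is the summability $Hv\in\ell^1(\Lambda)$ in the second step: for $s\geq1$ the off-diagonal decay exponent $1/s$ in \eqref{planmodsp1} need not overcome the (at most exponential) growth of $v$, which is exactly why the extra condition on $v$ is imposed, whereas for $1/2\le s<1$ it is automatic. I expect this to be the only delicate point — everything else is a routine combination of the main results of Section \ref{section4} with the standard Gabor-frame dictionary for weighted modulation spaces.
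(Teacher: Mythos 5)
Your argument is correct, but it takes a genuinely different route from the paper's. The paper keeps everything continuous: taking $g\in\Sigma^1_1(\rd)$, it writes $V_g(\sigma^w f)(u)=\int_{\rdd}\langle \sigma^w\pi(z)g,\pi(u)g\rangle\,V_gf(z)\,dz$ via the inversion formula \eqref{invformula}, bounds this kernel by $w_{s,\eps}(u-z)$ through Theorem \ref{CGelfandPseudo} (with symbol weight $\equiv1$, as you do), and concludes at once from the convolution relation $L^{p,q}_m\ast L^1_v\hookrightarrow L^{p,q}_m$, since $w_{s,\eps}\in L^1_v$ under the stated growth assumption on $v$ (for $s<1$, \eqref{weight} suffices) --- precisely the dichotomy you isolate as the one delicate point. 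You instead discretize: after the same almost-diagonalization (note you only need the easy direction, i.e.\ restriction of \eqref{unobis2s} to the lattice, not the hard implication of Theorem \ref{equivdiscr-cont}), you prove $\ell^{p,q}_m$-boundedness of the Gabor matrix by Schur-type estimates and transfer it to $M^{p,q}_m$ through frame norm equivalences. This yields the sequence-space boundedness of the Gabor matrix as a by-product (of independent interest, e.g.\ for numerics), but at the cost of extra machinery the paper's proof avoids: the super-frame with dual window $\gamma\in S^{1/2}_{1/2}(\rd)$, the norm equivalence $\|f\|_{M^{p,q}_m}\asymp\|(\langle f,\pi(\lambda)\gamma\rangle)_\lambda\|_{\ell^{p,q}_m(\Lambda)}$ for $M^1_v$-windows on a general (possibly non-separable) lattice and for weights of exponential growth, the justification of applying $\sigma^w$ term by term to the Gabor expansion (which you assert via continuity on Gelfand-Shilov spaces; a cleaner way is to pass to the adjoint $(\bar\sigma)^w$ and use $L^2$-convergence of the expansion, $\sigma^w$ being $L^2$-bounded by the case $p=q=2$, $m=1$), and a density/weak-$*$ argument at the endpoints $p=\infty$ or $q=\infty$. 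All of these steps are standard, so your proof goes through, but the paper's continuous argument is shorter and essentially self-contained.
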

\begin{proof}
Let $g \in \Sigma_1^1(\rd)$ with $\| g \|_{L^2}=1$.
From the inversion formula \eqref{invformula},
\[
V_g (\sigma^w f)(u)=\int_{\R^{2d}}\langle \sigma^w \pi(z) g,
\pi(u)g\rangle \, V_g f(z) \, dz.
\]
The desired result thus follows if we can prove that the map $M(\sigma)$
defined by
\[
M(\sigma) G(u)=\int_{\R^{2d}}\langle  \sigma^w \pi(z) g,
\pi(u) g \rangle \, G(z) \, dz
\]
is continuous from $L^{p,q}_m(\rdd)$ into $L^{p,q}_m(\rdd)$. The characterization of Theorem \ref{CGelfandPseudo} assures the existence of an $\eps>0$, such that
$$|V_g (\sigma^w f)|=|M(\sigma) V_g (f)|\lesssim (w_{s,\eps}\ast|V_g f|)(u),\quad u\in \rdd
$$
where the weight function $w_{s,\eps}$ on $\rdd$ is defined in \eqref{pesieps}. The desired conclusion then follows from the relation $L^{p,q}_m\ast L^1_v\hookrightarrow L^{p,q}_m$, for $w_{s,\epsilon}\in L^1_v$ by the growth assumption on $v$ (if $s<1$, \eqref{weight} suffices).


\end{proof}

\begin{proposition}\label{gsooo}
Let $s\geq 1/2$, and consider a symbol $\sigma\in \cC^\infty(\rdd)$ that satisfies
\eqref{stimas}. Then the Weyl operator $\sigma^w$ is bounded on $S^s_s(\rd)$.
\end{proposition}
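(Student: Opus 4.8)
The strategy is to imitate the proof of Proposition~\ref{modsp}, replacing the convolution relation on weighted mixed-norm spaces by the sub-exponential analogue of Lemma~\ref{algpesi} and exploiting the characterization \eqref{zimmermann2} of $S^s_s(\rd)$ in terms of the decay of the STFT. Fix once and for all the normalized Gaussian as window $g$, so that $g\in S^{1/2}_{1/2}(\rd)\subseteq S^s_s(\rd)$ for every $s\geq 1/2$ and, at the same time, $g\in\Sigma^1_1(\rd)$. Taking $\alpha=0$ in \eqref{stimas} shows $\sigma\in L^\infty(\rdd)\subseteq (S^s_s)'(\rdd)$, hence $\sigma^w$ is a well-defined continuous map $S^s_s(\rd)\to(S^s_s)'(\rd)$, and Theorem~\ref{CGelfandPseudo} applies with $m\equiv v\equiv 1$ (when $s\geq 1$ the growth condition it imposes on $v$ is vacuous for $v\equiv 1$, and in any case $g\in\Sigma^1_1$): there is $\eps>0$ with
\[
|\langle \sigma^w \pi(z) g,\pi(u)g\rangle|\lesssim \exp\big({-\eps|u-z|^{1/s}}\big),\qquad z,u\in\rdd.
\]

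Now let $f\in S^s_s(\rd)$. Arguing exactly as in the proof of Proposition~\ref{modsp}, the inversion formula gives
\[
V_g(\sigma^w f)(u)=\int_{\rdd}\langle \sigma^w \pi(z)g,\pi(u)g\rangle\, V_g f(z)\, dz .
\]
By \eqref{zimmermann2} we have $|V_gf(z)|\lesssim\exp(-\delta|z|^{1/s})$ for some $\delta>0$, so combining with the Gabor matrix estimate above,
\[
|V_g(\sigma^w f)(u)|\lesssim \int_{\rdd}\exp\big({-\eps|u-z|^{1/s}}\big)\exp\big({-\delta|z|^{1/s}}\big)\, dz=(w_{s,\eps}\ast w_{s,\delta})(u),
\]
with the convolution now taken on $\rdd$. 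A continuous version of Lemma~\ref{algpesi} --- obtained by the same splitting of $\rdd$ into $\{|u-z|\leq|u|/2\}$ and $\{|u-z|>|u|/2\}$ together with the finiteness of $\int_{\rdd}\exp(-c|x|^{1/s})\,dx$ for $c>0$ --- yields $(w_{s,\eps}\ast w_{s,\delta})(u)\lesssim\exp(-\eta|u|^{1/s})$ with $\eta:=2^{-1/s}\min\{\eps,\delta\}>0$. Therefore $|V_g(\sigma^w f)(u)|\lesssim\exp(-\eta|u|^{1/s})$, and \eqref{zimmermann2} again gives $\sigma^w f\in S^s_s(\rd)$.

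It remains to upgrade this to continuity. Writing $X_\delta$ for the Banach space normed by $\|f\|_{X_\delta}=\sup_{z\in\rdd}|V_gf(z)|\exp(\delta|z|^{1/s})$, the characterization \eqref{zimmermann2} identifies $S^s_s(\rd)$ with the inductive limit $\bigcup_{\delta>0}X_\delta$; the estimates above show that, for each $\delta>0$, $\sigma^w$ maps $X_\delta$ boundedly into $X_\eta$, $\eta=2^{-1/s}\min\{\eps,\delta\}$, with operator norm depending only on $\eps,\delta,\sigma,g$, and composing with the continuous inclusion $X_\eta\hookrightarrow S^s_s(\rd)$ the universal property of the inductive limit yields the continuity of $\sigma^w\colon S^s_s(\rd)\to S^s_s(\rd)$. (Alternatively, one may argue softly: $\sigma^w\colon S^s_s(\rd)\to(S^s_s)'(\rd)$ is continuous, its image lies in $S^s_s(\rd)$, and $S^s_s(\rd)\hookrightarrow(S^s_s)'(\rd)$ continuously, so the graph of $\sigma^w\colon S^s_s(\rd)\to S^s_s(\rd)$ is closed and the closed graph theorem for (DFS)-spaces applies.) The only mildly delicate point is this last, soft, step; everything else is a routine transcription of Proposition~\ref{modsp} and Lemma~\ref{algpesi}.
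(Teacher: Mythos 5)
Your proposal is correct and follows essentially the same route as the paper: fix a window in $S^{1/2}_{1/2}(\rd)$, combine the Gabor matrix estimate of Theorem \ref{CGelfandPseudo} (with $m=v=1$) with the inversion formula and the STFT decay of $f$ from \eqref{zimmermann1}--\eqref{zimmermann2}, apply a continuous version of Lemma \ref{algpesi} to the resulting convolution, and conclude by \eqref{zimmermann2}. The only addition is your explicit continuity argument via the inductive-limit structure (or closed graph), which the paper leaves implicit in the uniform estimates; this is a harmless and sound refinement, not a different method.
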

\begin{proof} Fix a window function $g\in S^{1/2}_{1/2}(\rd)$. For $f\in S^s_s(\rd)$ we have $V_g f\in S^s_s(\rdd)$ in view of \eqref{zimmermann1}. Hence there exists a constant $h>0$ such that $|V_g f|(z)\lesssim e^{-h|z|^{1/s}}$, for every $z\in\rdd$. Thus, taking $\tilde{\eps}=\min\{\eps,h\}$ and using a continuous version of Lemma \ref{algpesi}, for every $u\in\rdd$,
\[
|V_g (\sigma^w f)(u)|\lesssim (w_{s,\eps}\ast w_{s,h})(u)\lesssim  (w_{s,\tilde{\eps}}\ast w_{s,\tilde{\eps}})(u)\lesssim 
w_{s,\,\tilde{\eps} 2^{-1/s}}(u).
\]
Using \eqref{zimmermann2} we obtain the claim.
\end{proof}

Boundedness results for pseudodifferential operators on Gelfand-Shilov spaces are also contained in \cite{ToftGS}.
\section{Applications to evolution equations}
In this section we apply the above almost diagonalization result to the propagators for certain constant coefficient evolution equations. \par
We consider an operator
\begin{equation}\label{operin}
P(\partial_t,D_x)=\partial_t^m +\sum_{k=1}^{m}a_k(D_x)\partial_t^{m-k},\quad t\in\R,\ x\in\R^d,
\end{equation}
where $a_{k}(\xi)$, $1\leq k\leq m$, are polynomials.
 They may be non-homogeneous, and their degree may be arbitrary.  Here we set $D_{x_j}=\frac{1}{2\pi i}\partial_{x_j}$, $j=1,\ldots,d$.\par
We are interested in the forward Cauchy problem
\[
\begin{cases}
P(\partial_t,D_x)u=0,\quad (t,x)\in\R_+\times\rd\\
\partial_t^k u(0,x)=u_k(x),\quad 0\leq k\leq m-1,
\end{cases}
\]
where $u_k\in\cS(\rd)$, $0\leq k\leq m-1$.
Hence we suppose that the {\it forward Hadamard-Petrowsky condition} is satisfied: {\it There exists a constant $C>0$ such that}
\begin{equation}\label{hp}
(\tau,\xi)\in\mathbb{C}\times\rd,\quad P(i\tau,\xi)=0 \Longrightarrow {\rm Im}\, \tau\geq -C.
\end{equation}
This is a sufficient and necessary condition for the above Cauchy problem with Schwartz data to be well posed \cite[Section 3.10]{rauch}.
By taking the Fourier transform with respect to $x$ and using classical results for the fundamental solution to ordinary differential operators \cite[pp. 126-127]{schwartz}, one sees that the solution is then given by
\[
u(t,x)=\sum_{k=0}^{m-1} \partial_t^k E(t,\cdot)\ast \Big(u_{m-1-k}+\sum_{j=1}^{m-k-1} a_j(D_x) u_{m-k-1-j}\Big).
\]
Here $E(t,x)=\Fur^{-1}_{\xi\to x} \sigma(t,\xi)$, where $\sigma(t,\xi)$ is the unique solution to
\[
\Big(\partial^m_t+\sum_{k=1}^m a_k(\xi) \partial_t^{m-k}\Big)\sigma(t,\xi)=\delta(t)
\]
supported in $[0,+\infty)\times\rd$. The distribution $E(t,x)$ is therefore the  fundamental solution of $P$ supported in $[0,+\infty)\times\rd$. \par
The study of the Cauchy problem is therefore reduced to that of the Fourier multiplier
\begin{equation}\label{hpm0}
\sigma^w(t,D_x)=\sigma(t,D_x) f =\Fur^{-1} \sigma(t,\cdot)\Fur f=E(t,\cdot)\ast f.
\end{equation}
(For Fourier multipliers the Weyl and Kohn-Nirenberg quantizations give the same operator).
\begin{example}\rm 
Here are some classical operators and the symbols $\sigma(t,\xi)$ of the corresponding propagators for $t\geq0$ ($\sigma(t,\xi)=0$ for $t<0$): \par
Wave operator $\partial^2_t-\Delta$; $\sigma(t,\xi)=\frac{\sin(2\pi|\xi|t)}{2\pi|\xi|}$.\par
Klein-Gordon operator $\partial^2_t-\Delta+m^2$, with $m>0$; $\sigma(t,\xi)=\frac{\sin(t\sqrt{4\pi^2|\xi|^2+m^2})}{\sqrt{4\pi^2|\xi|^2+m^2}}$.\par
Heat operator $\partial_t-\Delta$; $\sigma(t,\xi)=e^{-4\pi^2|\xi|^2 t}$.
\end{example}
We now introduce an assumption under which the multiplier $\sigma(t,D_x)$ falls in the class of pseudodifferential operators considered above. \par\medskip
{\it Assume that there are constants $C>0$, $\nu\geq1$ such that}
\begin{equation}\label{hpm}
(\tau,\zeta)\in\mathbb{C}\times\mathbb{C}^d,\quad P(i\tau,\zeta)=0 \Longrightarrow {\rm Im}\, \tau\geq -C(1+|{\rm Im}\,\zeta|)^\nu.
\end{equation}
It is clear that this condition is stronger than the forward Hadamard-Petrowsky condition.
\begin{theorem}\label{E4}
Assume $P$ satisfies \eqref{hpm} for some $C>0$, $\nu\geq1$. Then the symbol $\sigma(t,\xi)$ of the corresponding propagator $\sigma(t,D_x)$ in \eqref{hpm0} satisfies the following estimates:
\begin{equation}\label{E3}
|\partial^\alpha_{\xi} \sigma(t,\xi)|\leq C^{(t+1)|\alpha|+t} (\alpha!)^s,\quad \xi\in\rd,\ t\geq0,\quad \alpha\in\bN^d,
\end{equation}
with $s=1-1/\nu$, for a new constant $C>0$.
\end{theorem}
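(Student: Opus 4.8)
The plan is to represent $\sigma(t,\cdot)$ as a Laplace--Bromwich contour integral in the $t$--variable, to complexify the spatial frequency, and to estimate the resulting holomorphic function on a polydisc by Cauchy's inequality, the radius being optimised at the end against $|\alpha|$ and $t$. After Fourier transform in $x$, for each fixed $\xi$ the function $t\mapsto\sigma(t,\xi)$ is the forward fundamental solution of the ordinary differential operator with characteristic polynomial $P(\lambda,\xi)=\lambda^m+\sum_{k=1}^m a_k(\xi)\lambda^{m-k}$; its roots are $i\tau_j(\zeta)$, where $\tau_j(\zeta)$ runs over the zeros of $\tau\mapsto P(i\tau,\zeta)$, so that \eqref{hpm} reads $P(\lambda,\zeta)=0,\ \zeta\in\bC^d\Rightarrow\mathrm{Re}\,\lambda\le C(1+|\mathrm{Im}\,\zeta|)^\nu$. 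For $m\ge2$ one has the absolutely convergent representation $\sigma(t,\xi)=\frac1{2\pi i}\int_{\mathrm{Re}\,\lambda=\gamma}e^{\lambda t}P(\lambda,\xi)^{-1}\,d\lambda$, valid and independent of $\gamma$ for $\gamma>\max_j\mathrm{Re}\,\lambda_j(\xi)$; for $m=1$ one simply has $\sigma(t,\xi)=e^{\lambda_1(\xi)t}$ for $t\ge0$ and the argument below simplifies. Fix now $\xi\in\rd$, $t\ge0$, $\alpha\in\bN^d$ and $R>0$; on the polydisc $D_R=\{\zeta:|\zeta_k-\xi_k|\le R\}$ one has $|\mathrm{Im}\,\zeta|\le\sqrt d\,R$, hence $\mathrm{Re}\,\lambda_j(\zeta)\le C_R:=C(1+\sqrt d\,R)^\nu$. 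Taking $\gamma:=C_R+1$, no root lies on $\mathrm{Re}\,\lambda=\gamma$ when $\zeta\in D_R$, so $\zeta\mapsto\frac1{2\pi i}\int_{\mathrm{Re}\,\lambda=\gamma}e^{\lambda t}P(\lambda,\zeta)^{-1}\,d\lambda$ is holomorphic near $D_R$ and agrees there with $\sigma(t,\cdot)$ (shifting from an admissible real-$\xi$ contour crosses no pole), whence $|\partial_\xi^\alpha\sigma(t,\xi)|\le\alpha!\,R^{-|\alpha|}\sup_{\zeta\in D_R}|\sigma(t,\zeta)|$.

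The core of the proof is a bound for $\sup_{\zeta\in D_R}|\sigma(t,\zeta)|$ that is \emph{uniform in $\xi$}. For $m\ge2$ and $\zeta\in D_R$,
\[
|\sigma(t,\zeta)|\le\frac{e^{\gamma t}}{2\pi}\int_{\bR}\frac{ds}{|P(\gamma+is,\zeta)|},\qquad P(\gamma+is,\zeta)=\prod_{j=1}^m\bigl(\gamma+is-\lambda_j(\zeta)\bigr).
\]
Using $|w|\ge\tfrac1{\sqrt2}(|\mathrm{Re}\,w|+|\mathrm{Im}\,w|)$ together with $\gamma-\mathrm{Re}\,\lambda_j(\zeta)\ge\gamma-C_R=1$, each factor is $\ge\tfrac1{\sqrt2}\bigl(1+|s-\mathrm{Im}\,\lambda_j(\zeta)|\bigr)$; keeping two of them and bounding the remaining $m-2$ below by $1/\sqrt2$ gives $|P(\gamma+is,\zeta)|^{-1}\le 2^{m/2}\bigl[(1+|s-b_1|)(1+|s-b_2|)\bigr]^{-1}$ for suitable real $b_1,b_2$, and $\int_\bR\bigl[(1+|s-b_1|)(1+|s-b_2|)\bigr]^{-1}ds$ is bounded by an \emph{absolute} constant. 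Hence $\sup_{D_R}|\sigma(t,\zeta)|\lesssim e^{(C_R+1)t}$, uniformly in $\xi$, $R$, $t$; the analogous estimate with the denominator shift $\gamma-C_R=a$ and then the optimal $a=1/t$ gives the sharper $\sup_{D_R}|\sigma(t,\zeta)|\lesssim t\,e^{C_Rt}$, which produces the correct vanishing as $t\to0^+$. For $m=1$ one has directly $|\sigma(t,\zeta)|=|e^{\lambda_1(\zeta)t}|\le e^{C_Rt}$.

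Combining the two estimates, $|\partial_\xi^\alpha\sigma(t,\xi)|\lesssim\alpha!\,R^{-|\alpha|}e^{C_Rt}$ for every $R>0$; for $\alpha=0$ let $R\to0$. For $\alpha\ne0$, $t>0$, minimise $R^{-|\alpha|}e^{C(1+\sqrt d R)^\nu t}$ in $R$: the optimum satisfies $\sqrt d\,R\asymp(|\alpha|/(Ct))^{1/\nu}$, for which $C_Rt\asymp|\alpha|$ and $R^{-|\alpha|}\asymp\bigl[d^{1/2}(Ct)^{1/\nu}|\alpha|^{-1/\nu}\bigr]^{|\alpha|}$. By Stirling $|\alpha|^{-|\alpha|/\nu}\lesssim(|\alpha|!)^{-1/\nu}e^{-|\alpha|/\nu}|\alpha|^{1/(2\nu)}$, the factor $e^{-|\alpha|/\nu}$ absorbing part of $e^{C_Rt}\asymp e^{|\alpha|}$, while $|\alpha|!\ge\alpha!$ yields $\alpha!\,(|\alpha|!)^{-1/\nu}\le(\alpha!)^{1-1/\nu}=(\alpha!)^s$ with $s=1-1/\nu$. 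Collecting the surviving geometric factors, using $t^{|\alpha|/\nu}\le(1+t)^{|\alpha|}\le e^{t|\alpha|}$, passing freely between $\alpha!$ and $|\alpha|!$ via \eqref{eqn:39}, and enlarging the constant (the residual factor $C^t$ in the target exponent absorbing the overall multiplicative constant and the case $\alpha=0$, where the refined $m\ge2$ estimate or the exact $m=1$ formula is used), one arrives exactly at $|\partial_\xi^\alpha\sigma(t,\xi)|\le C^{(t+1)|\alpha|+t}(\alpha!)^s$, which is \eqref{E3}.

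The main obstacle is the uniformity in the middle step: $\int_\bR|P(\gamma+is,\zeta)|^{-1}\,ds$ has to be controlled independently of $\zeta\in D_R$, although both $|\mathrm{Re}\,\lambda_j(\zeta)|$ and $|\mathrm{Im}\,\lambda_j(\zeta)|$ may grow arbitrarily with $|\xi|$ — the elementary factorwise inequality above is precisely what makes this possible, and it is the reason the final bound is $\xi$-uniform. A close second difficulty is the balance in the $R$--optimisation: it is exactly this balance (through Stirling) that converts the growth exponent $\nu$ of \eqref{hpm} into the Gevrey index $s=1-1/\nu$, and one must keep careful track of the $t$--dependence (distinguishing $m=1$ from $m\ge2$ and using the refined choice $\gamma=C_R+1/t$) in order to land on the stated exponent $(t+1)|\alpha|+t$ valid for all $t\ge0$.
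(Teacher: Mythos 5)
Your argument is correct and, in its decisive step, coincides with the paper's: both proofs rest on the holomorphic extension of $\sigma(t,\cdot)$ in $\zeta$, an exponential bound of the type \eqref{hpm2} in terms of $(1+|\mathrm{Im}\,\zeta|)^\nu$, Cauchy's integral formula on a polydisk of radius comparable to $|\alpha|^{1/\nu}$, and a Stirling-type step converting $(1+|\alpha|)^{-|\alpha|/\nu}$ into $C^{|\alpha|}(\alpha!)^{-(1-s)}$. The genuine difference is that the paper simply \emph{cites} the bound \eqref{hpm2} (with constant exactly $1$, from Cattabriga and Rauch), whereas you re-derive it from \eqref{hpm} via the Bromwich representation of the forward fundamental solution of the ODE in $t$ and a factorwise lower bound for $|P(\gamma+is,\zeta)|$ on the shifted contour; this makes the proof self-contained, and your uniformity step is sound, since $\int_{\mathbb{R}}[(1+|s-b_1|)(1+|s-b_2|)]^{-1}\,ds$ is indeed bounded by an absolute constant (e.g.\ via $\frac{1}{ab}\le\frac12(a^{-2}+b^{-2})$), independently of how large the roots are. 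The price is a multiplicative prefactor that the paper's cited estimate does not have, and which matters only for $\alpha=0$ and small $t$; you handle this correctly through the refined small-$t$ estimate and the exact formula for $m=1$ (note only that your refined bound is really $\lesssim t^{m-1}e^{C_Rt}$ rather than $t\,e^{C_Rt}$ when $m\ge3$, which is harmless since all that is needed is the vanishing as $t\to0^+$, while for $\alpha\neq0$ the factor $C^{|\alpha|}$ absorbs every constant). Finally, you optimize the radius $t$-dependently, $\sqrt d\,R\asymp(|\alpha|/(Ct))^{1/\nu}$, while the paper fixes $R=(1+|\alpha|)^{1/\nu}$ as in \eqref{E1}--\eqref{E2}; both choices land on the same exponent $(t+1)|\alpha|+t$, the paper's being slightly lighter on bookkeeping because the whole $t$-dependence then sits in $e^{Ct(1+\sqrt d R)^\nu}\le C_1^{t(|\alpha|+1)}$.
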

\begin{proof}
It is well known (see e.g.\  \cite[Proposition 1.3.2, Lemma 1.3.3]{cattabriga}, \cite[Section 3.10]{rauch}) that $\sigma(t,\xi)$ extends to an entire analytic function in the second variable, and that the estimates \eqref{hpm}  imply the bound
\begin{equation}\label{hpm2}
| \sigma(t,\zeta)|\leq e^{Ct(1+|{\rm Im}\, \zeta|)^\nu},\quad \zeta\in\mathbb{C}^d.
\end{equation}
Now, given $\xi\in\rd$, we consider  the polydisk $B(\xi,R)=\prod_{j=1}^d B_j(\xi_j,R)=\{\zeta\in\mathbb{C}^d:\, |\zeta_j-\xi_j|\leq R, 1\leq j\leq d\}$, with $R=(1+|\alpha|)^{1/\nu}$, $\alpha\in\bN^d$. Observe that \eqref{hpm2} implies
\begin{equation}\label{E1}
\sup_{\zeta\in B(\xi,R)}|\sigma(t,\zeta)|\leq e^{Ct(1+\sqrt{d}R)^\nu}.
\end{equation}
The Cauchy's Generalized Integral Formula
$$\partial^\alpha_{\xi} \sigma(t,\xi)=\frac{\a!}{(2\pi i)^d}\int\cdots\int_{\partial B_1(\xi_1,R)\times\cdots\times\partial B_d(\xi_d,R)}\frac{\sigma(t,\zeta_1,\dots,\zeta_d)}{(\zeta_1-\xi_1)^{\a_1+1}\cdots(\zeta_d-\xi_d)^{\a_d+1}}d\zeta_1\cdots d\zeta_d
$$
and the estimate \eqref{E1} yield
\begin{equation}\label{E2}
|\partial^\alpha_{\xi} \sigma(t,\xi)|\leq \frac{\alpha! e^{Ct(1+\sqrt{d}R)^\nu}}{R^{|\alpha|}}\leq C_1^{t(|\alpha|+1)}\frac{\alpha!}{{(1+|\alpha|)}^{|\alpha|/\nu}}.
\end{equation}
Using Stirling formula and $1/\nu=1-s$, we have
$$\frac{1}{(1+|\alpha|)^{|\alpha|/\nu}}\leq \frac{C_2^{|\a|}}{(\a!)^{1-s}},
$$
which combined with  \eqref{E2} provides the desired majorization \eqref{E3}.
\end{proof}

Observe that the assumption $\nu\geq 1$ in the above theorem implies $0\leq s<1$. \par
Combining Theorem \ref{E4} and  Theorem \ref{CGelfandPseudo} we obtain at once our main application.
\begin{theorem}\label{teo5.2}
Assume $P$ satisfies \eqref{hpm} for some $C>0$, $\nu\geq1$, and set  $r=\min\{2,\nu/(\nu-1)\}$. If $g\in S^{1/r}_{1/r}(\rd)$ then  $\sigma(t,D_x)$  in \eqref{hpm0} satisfies
\begin{equation}\label{hpm3} |\langle \sigma(t,D_x) \pi(z)
g,\pi(w)g\rangle|\leq C \exp\big({-\eps |w-z|^{r}}\big),\qquad \forall\,
z,w\in\rdd,
\end{equation}
for some $\epsilon>0$ and for a new constant $C>0$. The constants $\epsilon$ and $C$ are uniform when $t$ lies in bounded subsets of $[0,+\infty)$.
\end{theorem}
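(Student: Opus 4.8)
The plan is to obtain \eqref{hpm3} as a direct combination of Theorem \ref{E4} and Theorem \ref{CGelfandPseudo}, the only delicate points being a bookkeeping of the regularity index and of the dependence on $t$.

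First I would note that, as recalled right before the statement, the Fourier multiplier $\sigma(t,D_x)$ in \eqref{hpm0} coincides with the Weyl operator $\widetilde{\sigma}^w$ whose symbol $\widetilde{\sigma}(x,\xi)=\sigma(t,\xi)$ on $\rdd$ is independent of $x$; it belongs to $\cC^\infty(\rdd)$ because $\sigma(t,\cdot)$ is entire (as used in the proof of Theorem \ref{E4}). By Theorem \ref{E4}, $|\partial_\xi^\alpha\sigma(t,\xi)|\leq C^{(t+1)|\alpha|+t}(\alpha!)^s$ for all $\xi\in\rd$, $\alpha\in\bN^d$, with $s=1-1/\nu$. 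Since $\widetilde{\sigma}$ does not depend on $x$, the same bound holds for every derivative $\partial^\beta_{(x,\xi)}\widetilde{\sigma}$, $\beta\in\bN^{2d}$; hence $\widetilde{\sigma}$ satisfies the estimate \eqref{simbsmooth} of Theorem \ref{CGelfandPseudo} with $m\equiv v\equiv1$ and exponent $s$.

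The point requiring care is that Theorem \ref{CGelfandPseudo} needs the regularity index to be $\geq1/2$, whereas $s=1-1/\nu$ may be as small as $0$. To fix this I would simply enlarge the exponent: since $\alpha!\geq1$ one has $(\alpha!)^{s}\leq(\alpha!)^{\widetilde{s}}$ for every $\widetilde{s}\geq s$, so $\widetilde{\sigma}$ also satisfies \eqref{simbsmooth} with $\widetilde{s}:=\max\{1/2,\,1-1/\nu\}$. Because $\nu\geq1$, we have $\widetilde{s}\in[1/2,1)$, and a short case distinction gives $1/\widetilde{s}=\min\{2,\nu/(\nu-1)\}=r$, i.e.\ $\widetilde{s}=1/r$; moreover $S^{1/r}_{1/r}(\rd)$ is nontrivial since $2/r\geq1$. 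I would then apply Theorem \ref{CGelfandPseudo} with $s$ replaced by $\widetilde{s}\in[1/2,1)$, with $m\equiv v\equiv1$ (so all growth conditions on $v$ are void) and with the window $g\in S^{\widetilde{s}}_{\widetilde{s}}(\rd)=S^{1/r}_{1/r}(\rd)$ of the hypothesis; the implication $(i)\Rightarrow(ii)$ of that theorem yields $\eps>0$ with
\[
|\langle\widetilde{\sigma}^w\pi(z)g,\pi(w)g\rangle|\lesssim\exp\big(-\eps|w-z|^{1/\widetilde{s}}\big)=\exp\big(-\eps|w-z|^{r}\big),
\]
which is \eqref{hpm3}.

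Finally, for the uniformity in $t$ I would track constants. For $t\in[0,T]$ the factor $C^{(t+1)|\alpha|+t}$ in \eqref{E3} is dominated by $C^{T}\,(C^{T+1})^{|\alpha|}$, so $\widetilde{\sigma}$ satisfies \eqref{simbsmooth} with a constant depending only on $T$, not on $t\in[0,T]$. Since the proof of Theorem \ref{CGelfandPseudo} runs through Theorem \ref{teo1}, Proposition \ref{equi} and Lemma \ref{algpesi}, each of which exhibits the output constants ($\eps$ and the constant implicit in $\lesssim$) as explicit functions of the symbol constant and of the fixed window $g$, both $\eps$ and $C$ in \eqref{hpm3} can be chosen independent of $t\in[0,T]$. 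I expect the main (and essentially only) obstacle to be precisely this constant-tracking, together with the passage from $s$ to $1/r$; the rest is a formal concatenation of the two preceding theorems.
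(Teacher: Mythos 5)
Your proposal is correct and follows essentially the same route as the paper, which obtains Theorem \ref{teo5.2} directly by combining Theorem \ref{E4} with Theorem \ref{CGelfandPseudo}; your extra bookkeeping (raising the index from $s=1-1/\nu$ to $\widetilde{s}=\max\{1/2,1-1/\nu\}=1/r$ via $(\alpha!)^{s}\leq(\alpha!)^{\widetilde{s}}$, and tracking the $t$-dependence of the symbol constant on bounded time intervals) just makes explicit what the paper leaves implicit.
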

Notice that in \eqref{hpm3} we have $r>1$, so that we always obtain super-exponential decay. \par
We now show that, if $P(\partial_t,D_x)$ is any hyperbolic operator, then \eqref{hpm} is satisfied with $\nu=1$, and hence the above theorem applies with Gaussian decay ($r=2$ in \eqref{hpm3}), for windows $g\in S^{1/2}_{1/2}(\rd)$. \par
We recall that the operator $P(\partial_t,D_x)$ is called hyperbolic with respect to $t$ if the direction $N=(1,0,\ldots,0)\in\R\times\rd$ is non characteristic for $P$ (i.e.\ its principal symbol -- the higher order homogeneous part in the symbol -- does not vanish at $N$), and $P$ satisfies the forward Hadamard-Petrowsky condition \eqref{hp}. It follows then that the operators $a_k(D_x)$ in \eqref{operin} must have degree $\leq k$ and $P$ has order $m$. \par
The wave and Klein-Gordon operators are of course the most important examples of hyperbolic operators. We emphasize, however, that $P$ is {\it not} required to be strictly hyperbolic, namely the roots of the principal symbol are allowed to coincide. For example, the operator $P=\partial_t^2-\sum_{j,k=1}^d a_{j,k}\partial_{x_j}\partial_{x_k}$, is hyperbolic if the matrix $a_{j,k}$ is real, symmetric and positive {\it semi}-definite.
\begin{proposition}\label{pro5.4}
Assume $P(\partial_t,D_x)$ is hyperbolic with respect to $t$. Then the condition \eqref{hpm} is satisfied with $\nu=1$ for some $C>0$, and hence
\begin{equation} |\langle \sigma(t,D_x) \pi(z)
g,\pi(w)g\rangle|\leq C \exp\big({-\eps |w-z|^{2}}\big),\qquad \forall\,
z,w\in\rdd,
\end{equation}
if $g\in S^{1/2}_{1/2}(\rd)$, for some $\epsilon>0$ and for a new constant $C>0$.
\end{proposition}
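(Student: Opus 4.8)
The plan is to deduce the statement directly from Theorem~\ref{teo5.2}, so the only real task is to show that a hyperbolic operator $P(\partial_t,D_x)$ automatically satisfies the refined Hadamard--Petrowsky condition \eqref{hpm} with the extreme exponent $\nu=1$. Once this is known, Theorem~\ref{teo5.2} applies with $s=1-1/\nu=0$ and $r=\min\{2,\nu/(\nu-1)\}=2$: for windows $g\in S^{1/r}_{1/r}(\rd)=S^{1/2}_{1/2}(\rd)$ it gives precisely the Gaussian estimate \eqref{hpm3} with $r=2$, together with the asserted uniformity of $C$ and $\epsilon$ for $t$ in bounded subsets of $[0,+\infty)$. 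Incidentally, Theorem~\ref{E4} with $\nu=1$ tells us that the symbol $\sigma(t,\xi)$ of the propagator satisfies \eqref{E3} with $s=0$, i.e.\ all its $\xi$-derivatives are uniformly bounded for $t$ in bounded sets -- the extreme degree of ultra-analytic regularity.

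To verify \eqref{hpm} with $\nu=1$ I would argue at the level of hyperbolic polynomials. Since $N=(1,0,\dots,0)$ is non-characteristic, the $a_k(D_x)$ have degree $\le k$, $P$ has order $m$, and the polynomial $Q(\tau,\zeta):=P(i\tau,\zeta)$ is, up to the non-vanishing factor $i^m$, monic of degree $m$ in $\tau$; the forward Hadamard--Petrowsky condition \eqref{hp} says precisely that $Q(\tau,\xi)\neq0$ when $\xi\in\rd$ is real and $\mathrm{Im}\,\tau$ is sufficiently negative. Thus $Q$ is hyperbolic with respect to the $\tau$-direction in the classical (G{\aa}rding) sense; equivalently -- by the homogeneity argument that rescales $\xi\mapsto\rho\xi$ in \eqref{hp}, lets $\rho\to+\infty$, and uses that the roots of the principal symbol at $-\xi$ are the negatives of those at $\xi$ -- the principal part $P_m$ has only real roots in $\tau$ for every real $\xi$, the lower order terms producing only a bounded shift when $\xi$ is real. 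At this point the structure theory of hyperbolic polynomials applies and confines all complex zeros $(\tau,\zeta)\in\mathbb{C}\times\mathbb{C}^d$ of $Q$ to a tube $\mathrm{Im}\,\tau\ge -C(1+|\mathrm{Im}\,\zeta|)$; see e.g.\ \cite[Vol.~II, Ch.~12]{hormander} or \cite[Section~3.10]{rauch}. This is \eqref{hpm} with $\nu=1$, and Theorem~\ref{teo5.2} then gives the claim.

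The crux -- and the only step that is not merely formal -- is this linear control of $\mathrm{Im}\,\tau$ by $|\mathrm{Im}\,\zeta|$, which genuinely uses hyperbolicity: operators satisfying \eqref{hp} but not hyperbolic (such as the heat operator, or more generally $\partial_t+(-\Delta)^k$) satisfy \eqref{hpm} only with some $\nu>1$. A naive comparison of $Q$ with its principal part via Rouch\'e's theorem does not suffice, since the principal symbol can be small precisely where the lower order terms dominate, namely for $|\mathrm{Re}\,\zeta|$ large; one has to invoke the actual algebraic structure of hyperbolic polynomials (the G{\aa}rding cone of $P_m$, or equivalently a Tarski--Seidenberg argument together with the energy estimate for $P$).
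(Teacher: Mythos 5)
Your proposal is correct and follows essentially the same route as the paper: reduce everything to verifying \eqref{hpm} with $\nu=1$ for hyperbolic $P$, then apply Theorem \ref{teo5.2} (so $s=0$, $r=2$) for windows $g\in S^{1/2}_{1/2}(\rd)$. The only difference is one of detail: where you defer the tube bound ${\rm Im}\,\tau\ge -C(1+|{\rm Im}\,\zeta|)$ to "the structure theory of hyperbolic polynomials", the paper derives it explicitly from H\"ormander's Proposition 12.4.4 — the non-vanishing of $Q$ at $\Xi+\tau N+\sigma\theta$ for $\Xi\in\R^{d+1}$, ${\rm Im}\,\tau<-C$, ${\rm Im}\,\sigma\le0$, $\theta\in\Gamma(P,N)$ — by the elementary cone-geometry observation that a zero forces $-(C+{\rm Im}\,\tau,{\rm Im}\,\zeta)\notin\Gamma(P,N)$, hence (openness of the G\aa rding cone containing $N$) its component along $N$ is at most $C_1<1$ times its norm, giving ${\rm Im}\,\tau\ge -C-\frac{C_1}{1-C_1}|{\rm Im}\,\zeta|$; this is exactly the crux you correctly identified but left to a citation.
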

\begin{proof}
Denote by $P_m$ the principal symbol of $P$ and $\Gamma(P,N)$ the component of $N$ in $\{\theta\in\R\times \rd: P_m(\theta)\not=0\}$. It follows from the hyperbolicity assumption (see e.g.\  \cite[Proposition 12.4.4]{hormander}) that  the symbol $Q(\tau,\xi)=P(i\tau,\xi)$ of $P(\partial_t,D_x)$ satisfies
\[
Q(\Xi+\tau N+\sigma\theta)\not=0\quad \ if\ \Xi\in\R^{d+1},\ {\rm Im}\,\tau<-C,\ {\rm Im}\,\sigma\leq0,\ \theta\in\Gamma(P,N),
\]
for the same constant $C$ which appears in \eqref{hp}.
We then deduce that
\[
(\tau,\zeta)\in\bC\times\bC^d,\ \ Q(\tau,\zeta)=P(i\tau,\zeta)=0 \Longrightarrow ({\rm Im}\, \tau,{\rm Im}\,\zeta)\not\in -C N-\Gamma(P,N),
\]
Indeed, if $(\tau,\zeta)=\Xi+i(-C N-\theta)$, with $\theta\in\Gamma(P,N)$, $\Xi\in\R^{d+1}$, then $\theta-\epsilon N\in \Gamma(P,N)$ is $\epsilon$ is small enough because $\Gamma(P,N)$ is open, and then
\[
P(\tau,\zeta)=P(\Xi-i(C+\epsilon) N-i(\theta-\epsilon N))\not=0.
\]
Hence $-(C+{\rm Im}\, \tau,{\rm Im}\,\zeta)\not\in \Gamma(P,N)$, and since the cone $\Gamma(P,N)$ is open, this implies
\[
\frac{-(C+{\rm Im}\, \tau,{\rm Im}\,\zeta)\cdot N}{|(C+{\rm Im}\, \tau,{\rm Im}\,\zeta)|}=\frac{-(C+{\rm Im}\, \tau)}{|(C+{\rm Im}\, \tau,{\rm Im}\,\zeta)|}\leq C_1,
\]
for some constant $C_1<1$, which gives
\[
{\rm Im}\, \tau\geq -C-\frac{C_1}{1-C_1}|{\rm Im}\,\zeta|.
\]
\end{proof}
\begin{example}\label{esempio}\rm
Consider the wave operator $P=\partial^2_t-\Delta$ in $\R\times\R^d$, hence $\sigma(t,\xi)=\frac{\sin(2\pi|\xi|t)}{2\pi|\xi|}$. Using the expression for its forward fundamental solution we can estimate directly the matrix decay. Consider, for simplicity, the case of dimension $d\leq 3$. We take  $g(x)=2^{d/4}e^{-\pi|x|^2}$ as  window function, which is allowed because $g\in S^{1/2}_{1/2}(\rd)$, and moreover $\|g\|_{L^2}=1$ (Gaussian functions minimize the Heisenberg uncertainty so that they are, generally speaking, a natural choice for wave-packet decompositions). We claim that
\[
|\langle \sigma(t,D_x) M_{\xi} T_{x} g, M_{\xi'} T_{x'} g\rangle|\leq t e^{-\frac{\pi}{2}[|\xi'-\xi|^2+(|x'-x|-t)_+^2]},\quad x,x',\xi,\xi'\in\rd,\quad d\leq3,
\]
where $(\cdot)_+$ denote positive part. This agrees with Proposition \ref{pro5.4}, with the constants made explicit.
 \par
In fact, in dimension $d\leq3$ we know that
\[
\sigma(t,D_x)f(x)=\int f(x-y)\, d\mu_t(y)
\]
where $\mu_t$ is a  positive Borel measure, supported in the ball $B(0,t)$, with total mass $=t$ (see e.g.\  \cite[Chapter 4]{rauch}). To be precise, we have $d\mu_t(y)= (1/2)\chi_{[-t,t]}dy$ in dimension $1$, $d\mu_t(y)=(2\pi)^{-1}(t^2-|y|^2)_+^{-1/2}dy$ in dimension $2$, and $d\mu_t= (4\pi t)^{-1} d\sigma_{\partial B(0,t)}$ in dimension $3$ (surface measure).
\par
Hence, using $T_y M_\xi=e^{-2\pi i y\xi} M_\xi T_y$,
\[
\langle \sigma(t,D_x) M_{\xi} T_x g, M_{\xi'} T_{x'} g\rangle= \int e^{-2\pi i y\xi}\langle M_{\xi} T_{x+y} g, M_{\xi'} T_{x'} g\rangle \, d\mu_t(y).
\]
An explicit computation shows that
\[
|\langle \pi(z) g,\pi(w) g\rangle|= e^{-\frac{\pi}{2}|w-z|^2},\quad z,w\in\rdd,
\]
so that
\begin{align*}
|\langle \sigma(t,D_x) M_{\xi} T_x g, M_{\xi'} T_{x'} g\rangle|&\leq \int e^{-\frac{\pi}{2}[|\xi'-\xi|^2+|x'-x-y|^2]} \, d\mu_t(y)\\
&\leq t e^{-\frac{\pi}{2}[|\xi'-\xi|^2+(|x'-x|-t)_+^2]},
\end{align*}
which gives the claim.\par
Consider now the Gabor frame $\mathcal{G}(g,\Lambda)$, with $g(x)=2^{d/4}e^{-\pi|x|^2}$, $\Lambda=\bZ^d\times (1/2)\mathbb{Z}^d$ (\cite[Theorem 7.5.3]{grochenig}), and the Gabor matrix
\[
T_{m',n',m,n}=\langle \sigma(t,D_x) M_{n} T_m g, M_{n'} T_{m'} g \rangle,\quad (m,n),\ (m',n')\in\Lambda.
\]
We therefore have
\[
|T_{m',n',m,n}|\leq \tilde{T}_{m',n',m,n}:=  t e^{-\frac{\pi}{2}[|n'-n|^2+(|m'-m|-t)_+^2]},\quad (m,n),\ (m',n')\in\Lambda,\quad d\leq3.
\]
 \end{example}

We now present a class of examples of operators which satisfy Theorem \ref{teo5.2} and are not hyperbolic.
\begin{example}\label{esempio-calore}\rm
Consider the operator
\begin{equation}\label{exa14}
P(\partial_t,D_x)=\partial_t+(-\Delta)^k,
\end{equation}
with $k\geq 1$ integer. When $k=1$ we have the heat operator. Its symbol is the polynomial
\[
P(i\tau,\zeta)=i\tau+(4\pi^2 \zeta^2)^k,
\]
We claim that it satisfies \eqref{hpm} with $\nu=2k$. \par
If $\tau\in\bC,\ \xi,\eta\in\R^d$ and $P(i\tau,\xi+i\eta)=0$ then
\[
{\rm Im}\, \tau=(2\pi)^{2k}{\rm Re} (|\xi|^2+2i\xi\cdot\eta-|\eta|^2)^k=(2\pi)^{2k} |\xi|^{2k}+Q(\xi,\eta),
\]
where $Q(\xi,\eta)$ is a homogeneous polynomial of degree ${2k}$, with $Q(\xi,0)=0$. Hence, for every $\epsilon>0$ and some constant $C_\epsilon>0$,
\[
|Q(\xi,\eta)|\lesssim\sum_{j+l=2k, j\geq0, l\geq1} |\xi|^{j}|\eta|^l\leq \epsilon |\xi|^{2k}+C_\epsilon |\eta|^{2k},
\]
 where we applied the inequality $ab\leq \frac{(\epsilon a)^p}{p}+\frac{(b/\epsilon)^q}{q}$ (with $a=|\xi|^j$, $b=|\eta|^l$, $p=2k/j$ and $q=2k/l$) to the terms of the sum with $j\geq1$. Taking $\epsilon=(2\pi)^{2k}/2$ we get
\[
{\rm Im}\, \tau\geq \frac{(2\pi)^{2k}}{2}|\xi|^{2k}-C|\eta|^{2k}\geq -C|\eta|^{2k},
\]
which proves the claim.\par
Hence, for the operator $P$ in \eqref{exa14}, Theorem \ref{teo5.2} applies with $\nu=2k$ and $r=2k/(2k-1)$.
\end{example}
\begin{remark}\label{osservazione-variabile}\rm
Let us add a few words about the Gabor analysis of hyperbolic problems with variable coefficients. In this regards, the example of the transport equation with variable coefficients, standard test for numerical schemes, is somewhat discouraging. In fact, for a fixed time $t>0$, the operator $T$ solving the Cauchy problem turns out to be a change of variables, and Gabor frames are not suited to follow the corresponding non-linear propagation of singularities. Heuristically this can be seen as follows. Decomposition of a function in terms of Gabor atoms $\pi(\lambda)g$ corresponds, geometrically, to a uniform partition of the time-frequency space (or phase space, in PDE's terminology) into boxes, each atom occupying a box, loosely speaking (\cite[pag 211]{grochenig}). The correspondence principle in Quantum Mechanics suggests one to follow the PDE's evolution in terms of its classical analog, namely the Hamiltonian flow in phase space. The propagator has therefore a sparse Gabor matrix if its Hamiltonian flow moves the above mentioned boxes,  but introduces {\it only a controlled number of overlaps}. However, this is not the case for changes of variables, as one sees easily by direct inspection. More rigorously, it follows from the results in \cite{cnr-flp,cnr-global} that H\"ormander-type Fourier integral operators, in particular the changes of variables, do no have a sparse Gabor representation.\par
However, Theorems \ref{CGelfandPseudo} and \ref{equivdiscr-cont} consent also applications to certain equations with variable coefficients. As elementary example consider the transport equation
\[
\begin{cases}
\partial_t u-i\left(\sum_{j=1}^d a_j D_{x_j}+\sum_{j=1}^d b_j x_j\right)u=0\\
u(0,x)=u_0(x),
\end{cases}
\]
with $a_j,b_j\in\R$. The fundamental solution of the problem has symbol
\[
\sigma(t,x,\xi)= \exp\Big[ it\sum_{j=1}^d (a_j \xi_j+b_j x_j)\Big]
\]
satisfying \eqref{simbsmooth} with $m=1$, $s=0$, and the preceding arguments apply. More generally, it seems possible to treat similarly the pseudodifferential problems of the form \[
\partial_t u -i a^w(x,D)  u=0,\quad u(0,x)=u_0(x),
\]
where $a(x,\xi)$ is real-valued and $\partial_{x,\xi} a(x,\xi)$, as well as higher order derivatives, are bounded in $\rdd$, satisfying suitable estimates.
\end{remark}

\section*{Acknowledgements}
The authors would like to thank Prof. Karlheinz Gr\"ochenig for his inspiring comments about Gabor frames of Hermite functions.


\end{document}